\titleformat{\subsection}[runin]
{\bfseries} {\thesubsection{.}}{0.15cm}{}[.]
\titleformat{\subsubsection}[runin]
{\em}{\thesubsubsection{.}}{0.15cm}{}[.]
\newtheorem{theorem}{Theorem}[section]
\newtheorem{proposition}[theorem]{Proposition}
\newtheorem{lemma}[theorem]{Lemma}
\newtheorem{corollary}[theorem]{Corollary}
\theoremstyle{definition}
\newtheorem{definition}[theorem]{Definition}
\newtheorem{problem}[theorem]{Problem}
\numberwithin{equation}{section}
\numberwithin{figure}{section}
\newcommand\Pcal{\mathcal{P}}
\newcommand\Irm{\mathrm{I}}
\newcommand\Mrm{\mathrm{M}}
\newcommand\Orm{\mathrm{O}}
\newcommand\Urm{\mathrm{U}}
\newcommand\GL{\mathrm{GL}}
\newcommand\PGL{\mathrm{PGL}}
\newcommand\PO{\mathrm{PO}}
\newcommand\SO{\mathrm{SO}}
\newcommand\SU{\mathrm{SU}}
\newcommand\Sp{\mathrm{Sp}}
\newcommand\Ascr{\mathscr{A}}
\newcommand\Cscr{\mathscr{C}}
\newcommand\Fscr{\mathscr{F}}
\newcommand\Oscr{\mathscr{O}}
\newcommand\B{\mathbb{B}}
\newcommand\C{\mathbb{C}}
\newcommand\D{\overline{\mathbb D}}
\newcommand\CP{\mathbb{CP}}
\newcommand\HP{\mathbb{HP}}
\renewcommand\D{\mathbb D}
\renewcommand\H{\mathbb{H}}
\newcommand\N{\mathbb{N}}
\renewcommand\P{\mathbb{P}}
\newcommand\R{\mathbb{R}}
\newcommand\Z{\mathbb{Z}}
\newcommand\igot{\mathfrak{i}}
\newcommand\jgot{\mathfrak{j}}
\newcommand\kgot{\mathfrak{k}}
\renewcommand\igot{\mathfrak{i}}
\newcommand\ngot{\mathfrak{n}}
\newcommand\sgot{\mathfrak{s}}
\renewcommand\imath{\igot}
\newcommand\hra{\hookrightarrow}
\newcommand\lra{\longrightarrow}
\newcommand\wt{\widetilde}
\newcommand\wh{\widehat}
\newcommand\di{\partial}
\newcommand\dist{\mathrm{dist}}
\newcommand\Res{\mathrm{Res}}
\newcommand\cd{\overline{\D}}
\newcommand\Id{\mathrm{Id}}
\def\dist{\mathrm{dist}}
\newcommand\xistd{\xi_{\mathrm{std}}}
\numberwithin{equation}{section}
\begin{document}

\fancyhead[LO]{Proper superminimal surfaces in the hyperbolic four-space}
\fancyhead[RE]{F.\ Forstneri{\v c}} 
\fancyhead[RO,LE]{\thepage}

\thispagestyle{empty}

%% Title
\vspace*{1cm}
\begin{center}
{\bf\LARGE Proper superminimal surfaces of given conformal types in the hyperbolic four-space}

\vspace*{0.5cm}

%% Authors
{\large\bf  Franc Forstneri{\v c}} 
\end{center}

\vspace*{1cm}

\begin{quote}
{\small
\noindent {\bf Abstract}\hspace*{0.1cm}
Let $H^4$ denote the hyperbolic four-space. Given a bordered Riemann surface, $M$, we prove that every 
smooth conformal superminimal immersion $\overline M\to H^4$ can be approximated uniformly on compacts in 
$M$ by proper conformal superminimal immersions $M\to H^4$. In particular, $H^4$ contains 
properly immersed conformal superminimal surfaces normalised by any given open Riemann surface
of finite topological type without punctures.
The proof uses the analysis of holomorphic Legendrian curves in the twistor space of $H^4$.

\vspace*{0.2cm}

\noindent{\bf Keywords}\hspace*{0.1cm}  superminimal surface, hyperbolic space, twistor space, 
complex contact manifold, holomorphic Legendrian curve 

\vspace*{0.1cm}

\noindent{\bf MSC (2020):}\hspace*{0.1cm}} 
Primary 53A10, 53C28. Secondary  32E30, 37J55.

\vspace*{0.1cm}
\noindent{\bf Date: \rm May 5, 2020}

% 14R10 Affine spaces (automorphisms, embeddings, exotic structures, cancellation problem) 
%
% 32C25 Analytic subsets and submanifolds 
% 32E10 Stein spaces, Stein manifolds
% 32E30 Holomorphic and polynomial approximation, Runge pairs, interpolation
% 32H02 Holomorphic mappings, (holomorphic) embeddings and related questions 
% 32J27  Compact Kähler manifolds: generalizations, classification 
% 32M12 Almost homogeneous manifolds and spaces [See also 14M17] 
% 32Q56  Oka principle and Oka manifolds (since 2020)
% 32S20: Global theory of singularities; cohomological properties
% 30E10  Approximation in the complex domain 
% 32M17 Automorphism groups of Cn and affine manifolds 
%
%  34D10 Perturbations of ordinary differential equations
%  37J55  Contact systems
%
%  53A10 Minimal surfaces, surfaces with prescribed mean curvature
%  53C26 Hyper-Kähler and quaternionic Kähler geometry, "special'' geometry
%  53C25 Special Riemannian manifolds (Einstein, Sasakian, etc.) 
%  53C28 Twistor methods [See also 32L25] 
%  53D10 Contact manifolds, general
%  53D35 Global theory of symplectic and contact manifolds
%  57R42 Immersions 
%  58D10 Spaces of imbeddings and immersions
%
\end{quote}

%%%%%%%%%%%%%%%%%%%%%%%%%%%%%%
%
%   Introduction
%
%%%%%%%%%%%%%%%%%%%%%%%%%%%%%%

\section{Introduction}
Among minimal surfaces in an oriented four-dimensional Riemannian manifold $(X,g)$ 
there is an interesting subclass consisting of {\em superminimal surfaces of positive or negative
spin}. They were introduced in 1897 by Kommerell \cite{Kommerell1897} and were
studied by many authors; see \cite{Forstneric2020SM} for a brief historical account. 
The term {\em superminimal surface} was coined by Bryant \cite{Bryant1982JDG} (1982) in his 
seminal study of minimal surfaces in the four-sphere $S^4$ which arise as projections of holomorphic 
Legendrian curves in $\CP^3$, the Penrose twistor space of $S^4$. This {\em Bryant correspondence} 
\cite[Theorems B, B']{Bryant1982JDG} was extended to all oriented Riemannian four-manifolds $(X,g)$ 
by Friedrich \cite[Proposition 4]{Friedrich1984} who also showed in \cite{Friedrich1997} 
that superminimal surfaces in the sense of Bryant coincide with those of Kommerell. 

Assume that $M\subset X$ is a smooth oriented embedded surface with the induced conformal structure
in an oriented Riemannian four-manifold $(X,g)$. (Our considerations will also apply to immersed surfaces.) 
Then $TX|_M=TM\oplus N$ where $N=N(M)$ is the cooriented orthogonal normal 
bundle to $M$. A unit normal vector $n\in N_x$ at a point $x\in M$ determines a
{\em second fundamental form} $S_x(n):T_xM\to T_xM$, a self-adjoint linear operator
on the tangent space of $M$. For a fixed tangent vector $v\in T_x M$ we consider the closed curve 
\begin{equation}\label{eq:Ixv}
	I_x(v)=\bigl\{S_x(n)v: n\in N_x,\ |n|_g=1\bigr\} \subset T_xM.
\end{equation}

%
%   DEF OF SM
%
\begin{definition}\label{def:SM}
A smooth oriented embedded surface $M$ in an oriented Riemannian four-manifold
$(X,g)$ is {\em superminimal of positive (negative) spin} if for every point $x\in M$ and unit tangent vector 
$v\in T_xM$, the curve $I_x(v)\subset T_xM$ \eqref{eq:Ixv} is a circle centred at $0$ 
and the map $n \to S(n)v \in I_x(v)$ is orientation preserving (resp.\ orientation reversing). 
The last condition is void at points $x\in M$ where the circle $I_x(v)$ reduces to $0\in T_xM$.
The analogous definition applies to a smoothly immersed oriented surface $f:M\to X$. 
\end{definition}

Every superminimal surface is also a minimal surface; see Friedrich \cite[Proposition 3]{Friedrich1997}
and the discussion in \cite[Sect.\ 2]{Forstneric2020SM}. The converse only holds in special cases.
For example, every conformal minimal immersion of the two-sphere $S^2$ into the four-sphere 
$S^4$ with the spherical metric is superminimal; see \cite[Theorem C]{Bryant1982JDG} or 
\cite[Proposition 25]{Gauduchon1987A}.  The same holds for immersions of $S^2$ into the projective plane 
$\CP^2$ with the Fubini-Study metric (see \cite[Proposition 28]{Gauduchon1987A}). 
Superminimal surfaces in $S^4$ and $\CP^2$ with their natural metrics have been studied extensively; 
see the references in \cite[Sect.\ 2]{Forstneric2020SM}.

A motivation for the present paper is Bryant's theorem \cite[Corollary H]{Bryant1982JDG} 
that every compact Riemann surfaces, $M$, admits a conformal superminimal immersion into $S^4$ with 
the spherical metric. In view of the Bryant correspondence, this follows from his result 
\cite[Theorem G]{Bryant1982JDG} saying that every such $M$ admits a holomorphic Legendrian embedding 
$M\to \CP^3$ in the standard contact structure determined by the following $1$-form on $\C^4$:
\begin{equation}\label{eq:alpha}
	\alpha = z_1dz_2-z_2dz_1+z_3dz_4-z_4dz_3.
\end{equation}
Approximation theorems of Runge and Mergelyan type for Legendrian curves in $\CP^3$
have been obtained recently in 
\cite[Corollary 7.3]{AlarconForstnericLarusson2019X} and \cite[Corollary 1.11]{Forstneric2020Merg}.

In this paper we consider superminimal surfaces in the four dimensional {\em hyperbolic space} $H^4$, 
the unique simply connected complete Riemannian four-manifold of constant sectional curvature $-1$
(see \cite[Theorem 4.1]{doCarmo1992}). Among the geometric models for $H^4$ 
it will be most convenient for us to use the {\em Poincar\'e (conformal) ball model} 
given by the unit ball $\B=\{x\in \R^4:|x|^2<1\}$ endowed with the complete hyperbolic metric 
\begin{equation}\label{eq:gh}
	g_h =  \frac{4|dx|^2}{\left(1-|x|^2\right)^2},\quad\ x\in \B. 
\end{equation}
The ball model is related to the hyperboloid model in the Lorentz 
space $\R^{4,1}$ by the stereographic projection \eqref{eq:sigma}; see Sect.\ \ref{sec:H4intro}.

Recall that a {\em bordered Riemann surface} is an open domain of the form $M=R\setminus \bigcup_i \Delta_i$
in a compact Riemann surface $R$, where $\Delta_i$ are finitely many compact pairwise disjoint discs
(diffeomorphic images of $\cd=\{z\in\C:|z|\le 1\}$) with smooth boundaries $b\Delta_i$.
Its closure $\overline M$ is a {\em compact bordered Riemann surface}.

The following is our main result; it is proved in Sect.\ \ref{sec:proof} as a corollary to 
Theorem \ref{th:Omega}.

%
%   MAIN THEOREM
%
\begin{theorem}\label{th:main}
Let $M$ be a bordered Riemann surface. Every smooth conformal superminimal immersion 
$f:\overline M\to (\B,g_h)=H^4$ can be approximated uniformly on compacts in $M$ by proper 
conformal superminimal immersions $\tilde f:M\to \B$. Furthermore, $\tilde f$ can be chosen 
to agree with $f$ to a given finite order at  finitely many points in $M$.
\end{theorem}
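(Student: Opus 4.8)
The plan is to reduce the statement about superminimal immersions $\overline M\to H^4$ to a statement about holomorphic Legendrian curves in the twistor space $Z$ of $H^4$, and then apply a Runge–Mergelyan type approximation with a proper-map construction carried out inside $Z$. The twistor space of the Poincaré ball $(\B,g_h)$ is an open complex 3-fold $Z\to \B$ carrying a holomorphic contact structure, and by the Friedrich extension of the Bryant correspondence the conformal superminimal immersions $\overline M\to \B$ of positive spin are exactly the projections $\pi\circ F$ of holomorphic Legendrian immersions $F:\overline M\to Z$ (after possibly reversing orientation, one handles the negative-spin case identically). So from the given $f:\overline M\to\B$ I first lift to a holomorphic Legendrian immersion $F:\overline M\to Z$ with $\pi\circ F=f$. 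The target reformulation — which I expect is precisely the content of Theorem~\ref{th:Omega} referred to in the excerpt — is: \emph{every holomorphic Legendrian immersion $\overline M\to Z$ can be approximated, uniformly on compacts in $M$ and with prescribed jets at finitely many points, by holomorphic Legendrian immersions $M\to Z$ that are proper into $\B$}, i.e.\ whose projection to $\B$ tends to $b\B$ as one exhausts $M$.

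The main construction is then the standard Mergelyan-with-exhaustion scheme adapted to Legendrian curves, exploiting that $Z$ has the requisite Oka-type flexibility for its contact structure (this is where the recent results \cite{AlarconForstnericLarusson2019X, Forstneric2020Merg} on Legendrian curves enter). Concretely: exhaust $M$ by an increasing sequence of smoothly bounded compact bordered domains $\overline M = \overline M_0 \Subset M_1 \Subset M_2 \Subset \cdots$ with $\bigcup_j M_j = M$, each $\overline M_j$ a deformation retract of the next (so inclusions are Runge). Inductively, given a holomorphic Legendrian immersion $F_j:\overline M_j\to Z$ whose projection satisfies $\min_{bM_j}|f_j|\ge c_j$ for an increasing sequence $c_j\uparrow 1$, I push the boundary $b M_{j+1}$ of the next domain outward: using the Legendrian version of the Mergelyan/Runge theorem together with the ability to prescribe the $\B$-coordinate behaviour of $\pi\circ F$, I find a holomorphic Legendrian immersion $F_{j+1}:\overline M_{j+1}\to Z$ close to $F_j$ on $\overline M_j$ (hence still an immersion there), agreeing with $F_j$ to the prescribed order at the finitely many marked points, with $\min_{bM_{j+1}}|\pi\circ F_{j+1}|\ge c_{j+1}$, and with $\pi\circ F_{j+1}$ still mapping into $\B$. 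Passing the closeness parameters to zero fast enough, the $F_j$ converge uniformly on compacts to a holomorphic Legendrian immersion $F:M\to Z$; its projection $\tilde f = \pi\circ F:M\to\B$ is then a proper conformal superminimal immersion, approximates $f$, and matches the jet conditions. The final sentence of the abstract — existence for any finite-topology open Riemann surface without punctures — follows because such a surface is the interior of a compact bordered Riemann surface $\overline M$ and any conformal superminimal immersion $\overline M\to\B$ can be taken as the seed $f$ (e.g.\ a suitably small one exists by the analogous Legendrian existence statement).

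The hard part is not the exhaustion bookkeeping but the single inductive step: given a Legendrian immersion on $\overline M_j$ with projection bounded by $c_j<1$ on the boundary, produce a Legendrian immersion on $\overline M_{j+1}$ that (i) stays $C^0$-close to the old one on $\overline M_j$, (ii) has projection still contained in the open ball $\B$ everywhere on $\overline M_{j+1}$, yet (iii) pushes the new boundary's projection closer to $b\B$. Steps (i) and (iii) are handled by the Legendrian Mergelyan theorem applied on a suitable Runge compact (a union of $\overline M_j$ with a collection of arcs reaching near $b\B$), but reconciling (ii) with (iii) — keeping the image inside the \emph{open} convex ball while simultaneously dragging part of the curve out toward its boundary, all within the rigid Legendrian class where one cannot freely perturb one coordinate without compensating in another via the contact form $\alpha$ of \eqref{eq:alpha} — is the crux. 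The resolution I anticipate is a convexity/hull argument: because $\B$ is convex, one can route the approximating arcs along line segments toward the boundary and use that Legendrian curves in $Z$ project to superminimal, hence harmonic-type, maps satisfying a maximum principle, so that controlling the projection on the boundary controls it in the interior; quantitatively this is where the precise geometry of $Z\to\B$ and of the contact form must be used, and it is the step I expect to require the most care.
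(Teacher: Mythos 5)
Your reduction to holomorphic Legendrian curves in the twistor space via the Bryant--Friedrich correspondence is exactly the paper's first move, and you correctly identify both the target reformulation (this is indeed Theorem~\ref{th:Omega}) and the location of the real difficulty: given a Legendrian curve with boundary image at hyperbolic level $c<1$, produce a nearby Legendrian curve whose boundary image is at a higher level $c'>c$, without changing the conformal structure and without control of the map escaping in the interior. But the mechanism you offer for that step does not work, and it is not what the paper does.

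Your proposed mechanism has two parts, and both have problems. First, you propose a Mergelyan/Runge scheme over an exhaustion $\overline M_0\Subset\overline M_1\Subset\cdots$ of $M$, extending the map to the larger domain at each stage while prescribing boundary behaviour. But for a bordered Riemann surface $M$ this exhaustion bookkeeping is unnecessary and, more importantly, does not supply any mechanism to push the boundary of the curve toward $b\Omega$: after extending from $\overline M_j$ to $\overline M_{j+1}$, there is no control that prevents the extension from re-entering lower $\rho$-levels or even escaping $\Omega$ altogether, because the Legendrian constraint makes pointwise prescriptions impossible. Second, to save this you invoke convexity of $\B$ plus a ``maximum principle'' for the projections. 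This is the step that fails: the projections $\pi\circ F$ are superminimal (hence minimal) for the \emph{hyperbolic} metric $g_h$, not the Euclidean one, and minimality is not a conformal invariant; there is no Euclidean maximum principle for these surfaces, so convexity of the Euclidean ball cannot be used to conclude that controlling the projection on the boundary controls it in the interior. Even if one could make such a confinement argument, it would at best keep you inside $\B$ --- it would not drive the boundary of the curve outward, which is the actual obstruction.

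The paper's resolution is genuinely different and is the main novelty. The domain $\overline M$ is kept fixed throughout, and the inductive parameter is instead the hyperbolic level $c\in(0,1)$ measured by the exhaustion function $\rho([z])=(|z_3|^2+|z_4|^2)/(|z_1|^2+|z_2|^2)$ on $\Omega$. The inductive step (Lemma~\ref{lem:bigstep}) is a \emph{Riemann--Hilbert modification}, not a Mergelyan extension: one attaches, along a short arc $I\subset bM$, a continuously varying family of embedded Legendrian discs $L_{F(u)}$ whose projections are the totally geodesic hyperbolic $2$-planes $\Lambda(\pi(F(u)),V)$ of Proposition~\ref{prop:Lambda}. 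These discs are the decisive geometric input: property \eqref{eq:Lz2} says that such a disc starting at a point of level $c$ stays in levels $\ge c$ (so denting along them pushes outward, never inward), and property \eqref{eq:avoiding} says that once a point is outside a set $D(\omega,c,c')$, the whole Legendrian disc through it stays outside, which prevents subsequent modifications from undoing earlier progress. Making this work in $\CP^3$ (rather than in an affine $\C^3$) requires the projective Riemann--Hilbert theorem for Legendrian curves developed in Theorem~\ref{th:RH}, which is a new tool the paper builds precisely because the relevant disc families do not lie in a single affine chart; and the condition \eqref{eq:affine} is engineered so that the modification can nevertheless be localised to charts. Finally, the approximating curves are promoted to embeddings by the general position theorem (Theorem~\ref{th:emb}). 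None of these pieces --- the projective Riemann--Hilbert method, the family of Legendrian lifts of hyperbolic planes, or the monotonicity/avoidance structure of the sets $D(\omega,c,c')$ --- appear in your sketch, and they are exactly what fills the gap you identified.
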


What is new in comparison to the extant results in the literature is that we control not only the
(finite) topology of proper superminimal surfaces, but also their conformal type.

Any minimal surface in $H^4$ is open and % of hyperbolic conformal type, i.e., 
its conformal universal covering is the disc (see \cite[Corollary 6.3]{Forstneric2020Merg}). 
Since every open Riemann surface with finitely generated homology group 
$H_1(M,\Z)$ is conformally equivalent to a domain obtained by removing finitely many 
closed discs and points from a compact Riemann surface (see Stout \cite[Theorem 8.1]{Stout1965TAMS}),
bordered Riemann surfaces are precisely the  open Riemann surfaces of finite topology 
without punctures. This gives the following corollary to Theorem \ref{th:main}.

%
%   COROLLARY
%
\begin{corollary}\label{cor:main}
Every open Riemann surface of finite topological type without punctures is 
the conformal structure of a properly immersed superminimal surface in $H^4$.
\end{corollary}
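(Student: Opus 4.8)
The plan is to deduce Corollary \ref{cor:main} from Theorem \ref{th:main} by supplying, first, a uniformisation of $X$ as a bordered Riemann surface, and second, one ``seed'' conformal superminimal immersion of that surface into $H^4$ to which Theorem \ref{th:main} can be applied.

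For the uniformisation: an open Riemann surface $X$ of finite topological type has finitely generated $H_1(X,\Z)$, so by Stout \cite[Theorem 8.1]{Stout1965TAMS} it is conformally equivalent to a domain $R\setminus\bigl(\bigcup_i\Delta_i\cup\{p_1,\dots,p_k\}\bigr)$ in a compact Riemann surface $R$, where the $\Delta_i$ are finitely many pairwise disjoint closed discs and the $p_j$ are finitely many points. Since $X$ is assumed to have no punctures we have $k=0$, so $X$ is conformally a bordered Riemann surface $M=R\setminus\bigcup_i\Delta_i$, and it suffices to produce a proper conformal superminimal immersion $M\to H^4$.

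For the seed immersion: choose an open Riemann surface $W$ containing the compact bordered surface $\overline M$ (for instance $R$ with one point of $R\setminus\overline M$ deleted). By the Gunning--Narasimhan theorem, $W$ — and hence $\overline M$ by restriction — admits a holomorphic immersion $h$ into $\C$, i.e.\ a holomorphic map with nowhere-vanishing differential. Since $\overline M$ is compact, $h(\overline M)$ is bounded, so after multiplying $h$ by a small positive constant we may assume $h(\overline M)\subset\D$. Identify the unit disc $\D\subset\C$ with the totally geodesic plane $\Pi=\{x\in\B:x_3=x_4=0\}$ of $(\B,g_h)$ via $z\mapsto(\mathrm{Re}\,z,\mathrm{Im}\,z,0,0)$; the restriction of the metric \eqref{eq:gh} to $\Pi$ is the Poincar\'e metric of constant curvature $-1$, so this identification is conformal and exhibits $\Pi$ as a totally geodesic $H^2\subset H^4$. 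The composed map $f\colon\overline M\to\B$ is then a smooth conformal immersion whose image lies in $\Pi$, hence $f(\overline M)$ is totally geodesic in $H^4$; consequently $S_x\equiv0$ and the curve $I_x(v)$ of \eqref{eq:Ixv} reduces to $\{0\}$ for every $x$ and $v$, so $f$ is a smooth conformal superminimal immersion $\overline M\to(\B,g_h)=H^4$ in the sense of Definition \ref{def:SM} (of either spin, the orientation clause being void). Finally, Theorem \ref{th:main} applied to $f$ yields a proper conformal superminimal immersion $\tilde f\colon M\to\B$; since $M$ carries the conformal structure of $X$, the pair $(M,\tilde f)$ exhibits $X$ as the conformal structure of a properly immersed superminimal surface in $H^4$, proving the corollary.

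The argument is essentially formal once Theorem \ref{th:main} is in hand; the only step that invokes a genuine theorem is the construction of the seed, where the content is the Gunning--Narasimhan immersion result (one could alternatively obtain a seed inside the paper's twistor framework, by realising $\overline M$ as a small holomorphic Legendrian immersion into a Darboux chart of the twistor space of $H^4$ and projecting it down, using the Legendrian approximation theory behind Theorem \ref{th:Omega}). The main point to be careful about is the convention in Definition \ref{def:SM}: it explicitly admits the degenerate case in which $I_x(v)$ collapses to $\{0\}$, which is precisely why a totally geodesic surface qualifies as superminimal; granting that, I foresee no further difficulty.
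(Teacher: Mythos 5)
Your proof is correct and follows the same route as the paper: Stout's uniformisation identifies $X$ with a bordered Riemann surface $M$, and Theorem \ref{th:main} does the rest. The paper leaves the existence of a seed immersion implicit (it is supplied by Proposition \ref{prop:Lz}, totally geodesic hyperbolic $2$-planes being superminimal with $I_x(v)=\{0\}$); your Gunning--Narasimhan construction of a conformal immersion of $\overline M$ into such a plane fills in that detail correctly.
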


Although Corollary \ref{cor:main} might also hold for bordered Riemann surfaces
with punctures, it is notoriously difficult to deal with this case and we leave it as
an open problem. 

It has recently been shown \cite[Corollary 6.3]{Forstneric2020Merg} that any 
self-dual or anti-self dual Einstein four-manifold (this class includes $S^4$, $H^4$, 
and many other Riemannian four-manifolds) also contains 
complete relatively compact immersed superminimal surfaces of any conformal type 
in Corollary \ref{cor:main}, thereby solving the Calabi-Yau problem for such surfaces.

Our approach to Theorem \ref{th:main} uses the Bryant correspondence
to the effect that superminimal surfaces in an oriented Riemannian four-manifold $(X,g)$ 
are the projections of horizontal holomorphic curves in total spaces of twistor bundles 
$\pi^\pm : Z^\pm\to X$, with the sign depending on the spin of the superminimal surface;
see \cite[Sect.\ 4]{Forstneric2020SM}. Both twistor spaces $Z^\pm$ of $H^4=(\B,g_h)$ 
can be identified with the domain in $\CP^3$ given by
\begin{equation}\label{eq:Omega}
	\Omega=\left\{[z_1:z_2:z_3:z_4]\in \CP^3: |z_1|^2+|z_2|^2 > |z_3|^2+|z_4|^2\right\},
\end{equation}
and the twistor projection $\pi:\Omega\to\B$ is the restriction to $\Omega$ of the twistor
projection $\pi:\CP^3\to S^4$ for the spherical metric on $S^4$ (see Sect.\ \ref{sec:H4}). 
This is a particular instance of the general fact that the twistor bundles $\pi^\pm:Z^\pm \to X$ 
of an oriented Riemannian four-manifold $(X,g)$ depend only on the conformal class of the metric $g$, 
but the horizontal bundles $\xi^\pm\subset TZ^\pm$ %(orthogonal to the fibres of $\pi^\pm$) 
depend on the choice of a metric in that class. In the case at hand, both the spherical and the 
hyperbolic metric are conformally flat. The horizontal bundle $\xi \subset T\CP^3$ 
determined by the hyperbolic metric on $\B$ is the holomorphic contact bundle 
given by the homogeneous $1$-form
\begin{equation}\label{eq:beta}
	\beta = z_1dz_2-z_2dz_1 - z_3dz_4 + z_4dz_3
\end{equation}
(see Sect.\ \ref{sec:H4}). Compared to the $1$-form $\alpha$ \eqref{eq:alpha}, 
we note a change of sign in the last two terms. 
Although $\xi$ is contactomorphic to the standard contact structure $\xi_{\rm std}$ determined by 
$\alpha$ (in fact, $\xi_{\rm std}$ is the unique holomorphic contact structure on $\CP^3$ up to holomorphic 
contactomorphisms,  see LeBrun and Salamon \cite[Corollary 2.3]{LeBrunSalamon1994IM}),
these two structures behave very differently
with respect to the twistor projection $\pi:\CP^3\to S^4\cong \wh\R^4:=\R^4\cup\{\infty\}$. 
While $\xi_{\rm std}$ is orthogonal to all fibres of $\pi$ with respect to the Fubini-Study metric on $\CP^3$, 
$\xi$ is orthogonal to the fibres of $\pi$ over $\B$ and over the complementary open ball
$\B'=\wh\R^4\setminus \overline \B$ in the twistor metric induced by the hyperbolic metrics 
on $\B$ and $\B'$, but the fibres $\pi^{-1}(x)$ over points $x\in b\B$ are $\xi$-Legendrian curves. 
Any holomorphic Legendrian immersion $F:M\to (\CP^3,\xi)$ whose image does not lie in 
a fibre of $\pi$ determines an immersed superminimal surface in $\B$ obtained by 
intersecting the image of $F$ with $\Omega$ \eqref{eq:Omega} and projecting down to $\B$. 
If $M$ is compact and $F$ intersects $b\Omega$ transversely, 
we obtain a proper superminimal surface in $\B$ with smooth boundary in $b\B=S^3$, 
and we know by Bryant \cite[Theorem G]{Bryant1982JDG} that any compact Riemann surface embeds 
as a complex Legendrian curve in $(\CP^3,\xi)$. 
However, it seems impossible to control the conformal type of the examples obtained in this way.
In a related direction, Anderson \cite{Anderson1982} solved the Plateau problem for 
area minimizing generalized surfaces (currents) in the hyperbolic ball $\B^n$, $n\ge 3$, 
having a given boundary manifold in the sphere $b\B^n=S^{n-1}$.

On the other hand, our approach provides full control of the conformal type, but
we do not know whether the map $\tilde f:M\to H^4$ in Theorem \ref{th:main} can be chosen to extend
continuously or smoothly to the boundary of $M$. This difficulty is not unique to the present situation.
Indeed, even for the simplest minimal surfaces such as holomorphic curves
in a bounded strongly pseudoconvex domain $D$ in $\C^n$ for $n>1$ it is not known whether the analogue
of Theorem \ref{th:main} holds for maps extending smoothly to the boundary $bM$ without changing the
conformal type of $M$. (Continuous extendibility is possible in this case.)
This holds if $M$ is the disc (see Globevnik and the author \cite{ForstnericGlobevnik1992CMH}), 
or if the domain $D$ is convex and $M$ is arbitrary (see \v Cerne and Flores \cite{CerneFlores2007}). 
The most general analogue of Theorem \ref{th:main} in the holomorphic category,  
due to Drinovec Drnov\v sek and the author \cite{DrinovecForstneric2007DMJ}, pertains to
holomorphic curves in any complex manifold of dimension $>1$ having a smooth exhaustion function 
whose Levi form has at least two positive eigenvalues at every point. 
An analogue for minimal surfaces in minimally convex domains in flat Euclidean spaces $\R^n$, $n\ge 3$, 
was given by Alarc\'on et al.\ \cite[Theorems 1.1 and 1.9]{AlarconDrinovecForstnericLopez2019TAMS}.

Let us say a few words about the method of proof and the organisation of the paper.

In sections \ref{sec:H4intro} and \ref{sec:H4} we review the necessary 
background concerning the geometry of the hyperbolic space $H^4$ and its twistor space.
A more complete overview of the twistor space theory pertaining to superminimal 
surfaces is included in \cite{Forstneric2020SM}.

Our proof of Theorem \ref{th:main} relies upon the Bryant correspondence between superminimal surfaces 
in $H^4=(\B,g_h)$ and holomorphic Legendrian curves in its twistor space $(\Omega,\beta)$.
The main analytic technique used in the proof are Riemann-Hilbert modifications, using
approximates solutions of certain Riemann-Hilbert boundary value problems. One of the contributions of 
the present paper is the development of the Riemann-Hilbert modification technique for holomorphic 
Legendrian curves in projective spaces $\CP^{2n+1}$; see Theorem \ref{th:RH}. We expect that this result 
will find further applications. This classical complex-analytic method was adapted in 
\cite[Sect.\ 3]{AlarconForstnericLopez2017CM} 
to holomorphic Legendrian curves in Euclidean space $\C^{2n+1}$ with the standard contact structure 
inherited from $\CP^{2n+1}$; however, those results do not apply to 
projective spaces since the relevant geometric configurations need not be contained 
in any particular affine chart. We also prove a general position theorem showing that any noncompact
Legendrian curve in a projective space, possibly with branch points, can be approximated by holomorphic 
Legendrian embeddings; see Theorem \ref{th:emb}. 

With these newly developed tools in hand, we construct in Sect.\ \ref{sec:proof} 
properly immersed holomorphic Legendrian curves in the twistor domain $\Omega$ of $\B=H^4$
whose projections to $\B$ satisfy Theorem \ref{th:main}.
The geometry of the hyperbolic space and of its twistor space (see Secs.\ \ref{sec:H4intro}--\ref{sec:H4})
plays an essential role in the application of the Riemann-Hilbert method. 

The Riemann-Hilbert technique was used in a recent solution of the Calabi-Yau problem for 
superminimal surfaces and holomorphic Legendrian curves \cite{Forstneric2020SM}, 
and before that in the original Calabi-Yau problem concerning minimal surfaces in Euclidean spaces; 
see the formulation of the problem by S.-T.\ Yau  in \cite[p.\ 360]{Yau2000AMS} and 
\cite[p.\ 241]{Yau2000AJM}, and the recent advances summarized in
\cite{AlarconForstneric2019JAMS,AlarconForstneric2020RMI}. In the paper \cite{Forstneric2020SM} we used 
Riemann-Hilbert modifications with Legendrian discs of small extrinsic diameter, and in this case the required 
result (see \cite[Theorem 1.3]{AlarconForstneric2019IMRN}) follows from the 
Euclidean case by the contact neighbourhood theorem given by \cite[Theorem 1.1]{AlarconForstneric2019IMRN}.
On the contrary, the construction of proper Legendrian curves is more demanding since one 
must apply Riemann-Hilbert modifications with discs of big extrinsic diameter in order to push the 
boundary of the surface successively closer to the boundary of the given domain, 
thereby obtaining a proper map in the limit.

In conclusion, we mention an open problem related to Theorem \ref{th:main}.
There are constructions in the literature of infinite dimensional families of 
self-dual Einstein metrics with constant negative scalar curvature on the ball $\B\subset \R^4$  
inducing given conformal structures of a suitable type on the boundary sphere $S^3=b\B$; 
see in particular Graham and Lee \cite{GrahamLee1991}, Hitchin \cite{Hitchin1995}, and Biquard \cite{Biquard2002}. 
The twistor space of $\B$ with any such metric is a complex contact manifold. 
Does the analogue of Theorem \ref{th:main} hold true for any 
or all of these metrics, besides the standard one considered in the present paper?

%%%%%%%%%%%%%%%%%%%%%%%%%%%%%%
%
%   THE MAIN PROTAGONIST
%
%%%%%%%%%%%%%%%%%%%%%%%%%%%%%%

\section{Making the acquaintance of the principal protagonist}\label{sec:H4intro}

In this section we recall a few basics of hyperbolic geometry that shall be used in the paper,
referring to the monograph by Ratcliffe \cite{Ratcliffe1994} for further information. 

A geometric model of the hyperbolic $n$-space $H^n$ is the hyperboloid
\begin{equation}\label{eq:Sigma}
	\Sigma=\Sigma^+ = \left\{(x_1,\ldots,x_{n+1})\in \R^{n+1}: 
	x_1^2+\cdots + x_n^2-x_{n+1}^2=-1,\ \ x_{n+1}>0 \right\}.
\end{equation}
This is one of two connected components of the unit sphere of radius $\imath=\sqrt{-1}$ in the 
Lorentz space\begin{footnote}
%
% FOOTNOTE
%
{Lorentz spaces are named after Hendrik Antoon Lorentz (1853--1928), a Dutch physicist and 
a 1902 Nobel Prize winner who derived the transformation equations underpinning Albert Einstein's theory 
of special relativity on the Lorentz four-space $\R^{3,1}$, the {\em Minkowski space-time}.
The terms {\em Lorentz space} and {\em Lorentz transformation} were introduced by
Poincar\'e in his 1906 paper \cite{Poincare1906}. See \cite[Sect.\ 3.6]{Ratcliffe1994} for more information.}
\end{footnote} 
%
%  END FOOTNOTE
%
$\R^{n,1}$, the Euclidean space $\R^{n+1}$ with the indefinite Lorentz inner product 
\begin{equation}\label{eq:xcircy}
	x\circ y = x_1y_1+\cdots+x_ny_n-x_{n+1}y_{n+1}.
\end{equation}
The other connected component $\Sigma^-$ is obtained by taking $x_{n+1}<0$ in \eqref{eq:Sigma}. 
Considering $x_{n+1}$ as the time variable, $\Sigma^\pm$ are contained in the 
open cone of {\em time-like vectors} 
\begin{equation}\label{eq:T}
	T=\left\{x\in \R^{n+1} : x\circ x < 0\right\}=\left\{(x',x_{n+1}) \in \R^{n+1} : x_{n+1}^2 > |x'|^2\right\},
\end{equation}	
while all the nonzero tangent vectors to $\Sigma^\pm$ are contained in the
cone $S=\{x_{n+1}^2 < |x'|^2\}$ of {\em space-like vectors}.
Their common boundary $bT=bS$ is the {\em light cone}
\[
	LC=\left\{x\in \R^{n+1} : x\circ x = 0\right\}
	=\left\{(x',x_{n+1}) \in \R^{n+1} : x_{n+1}^2 = |x'|^2\right\}.
\]
The {\em Lorentz norm} $\|x\|=\sqrt{x\circ x}$ is a positive real number for space-like vectors,
a positive multiple of $\imath=\sqrt{-1}$ for time-like vector, and it vanishes for 
{\em light-like vectors} $x\in LC$.

Consider the stereographic projection 
$\sigma:\B=\{x\in \R^n:|x|^2<1\} \stackrel{\cong}{\lra} \Sigma^+$ given by 
\begin{equation}\label{eq:sigma}
	\sigma(x) = \left(\frac{2x}{1-|x|^2},\frac{1+|x|^2}{1-|x|^2}\right), \quad\ x\in \B.
\end{equation}
The pullback by $\sigma$ of the Lorentz pseudometric $\|x\|^2 = x\circ x$ on $\R^{n,1}$
(see \eqref{eq:xcircy}) is the hyperbolic Riemannian metric of constant sectional curvature $-1$ 
on $\B$ given by \eqref{eq:gh}. The same formula defines the hyperbolic metric on the complementary ball 
\begin{equation}\label{eq:Bprime}
	\B'=\R^4\cup\{\infty\} \setminus \overline \B. 
\end{equation}
Consider the reflection $\B\to \B'$ in the sphere 
$b\B=b\B'=S^{n-1}$ given by $\B\ni x\mapsto \frac{x}{|x|^2}=y\in \B'$, with $0\mapsto \infty$. Then  
$\frac{dx}{1-|x|^2}=\frac{dy}{1-|y|^2}$, and hence the reflection is an isometry. 
The stereographic projection $\psi:\R^n\cup\{\infty\} \to S^n$ given by  
\begin{equation}\label{eq:psi}
	\psi(x) = \left(\frac{2x}{1+|x|^2},\frac{1-|x|^2}{1+|x|^2}\right),\ \ x\in \R^n;
	\quad\ \psi(\infty) = \sgot = (0',-1)
\end{equation}
maps the balls $\B,\B'$ onto opposite hemispheres of the Euclidean sphere $S^n\subset \R^{n+1}$. 

The group of linear automorphisms of $\R^{n,1}$ preserving the Lorentz inner product $x\circ y$
is the {\em Lorentz group} $\Orm(n,1)$. Every Lorentz transformation preserves the 
light cone $LC$ and the open cones $T,S$ of time-like and space-like vectors, but it may interchange 
the two connected components $T^\pm$ of $T$ \eqref{eq:T} defined by $\pm\, x_{n+1}>0$. 
The group $\Orm(n,1)$ contains the index two {\em positive Lorentz group} $\PO(n,1)$ of 
Lorentz transformations mapping $T^+$ (and hence also $T^-$) to itself.
Since $\Sigma$ \eqref{eq:Sigma} is the component of the unit sphere of radius $\imath=\sqrt{-1}$ 
contained in $T^+$, the restriction of any positive Lorentz transformation $A\in \PO(n,1)$ 
to $\Sigma\cong H^n$ is an isometry of $H^n$; conversely, every isometry of $H^n$
extends to a unique $A\in \PO(n,1)$ (see \cite[Theorem 3.2.3]{Ratcliffe1994}).

Via the stereographic projection $\sigma:\B\to\Sigma$ given by \eqref{eq:sigma}, the group $\PO(n,1)$
of positive Lorentz transformations of $\R^{n,1}$ corresponds to the group $\Irm(\B)$ of isometries of the 
hyperbolic $n$-ball $(\B,g_h)$ \eqref{eq:gh}. Note that $\Irm(\B)$ coincides with the group 
$\Mrm(\B)$ of M\"obius transformations of the extended Euclidean space 
$\wh\R^n=\R^n\cup\{\infty\}$ mapping $\B$ 
onto itself (see Ratcliffe \cite[Theorem 4.5.2 and Corollary 1]{Ratcliffe1994}). Every M\"obius transformation
in $\Mrm(\B)$ is a composition of reflections of $\wh\R^n$ in spheres orthogonal
to $S^{n-1}=b\B$, where spheres passing through $\infty$ are hyperplanes through 
$0\in\R^n$. In particular, $\Mrm(\B)=\Mrm(\B')$ where $\B'$ is the complementary 
hyperbolic ball \eqref{eq:Bprime}. The restriction of the elements of $\Mrm(\B)$ to the sphere 
$S^{n-1}=b\B$ is the M\"obius  group $\Mrm(S^{n-1})$.

An important class of objects in $H^n$ are its hyperbolic planes. 
A vector subspace $V\subset \R^{n,1}$ is said to be {\em time-like} if it contains a 
time-like vector, i.e., $V$ intersects the cone $T$ \eqref{eq:T}. 
A {\em hyperbolic $m$-plane} of $H^n$ is the intersection of $H^n=\Sigma$ \eqref{eq:Sigma} 
with an $(m+1)$-dimensional time-like vector subspace of $\R^{n,1}$. 
Hyperbolic lines are precisely the geodesics of $H^n$ (cf.\ \cite[p.\ 70]{Ratcliffe1994}). 
Preimages of hyperbolic $m$-planes in $H^n$ by the stereographic projection $\sigma:\B\to \Sigma=H^n$
\eqref{eq:sigma} are called hyperbolic $m$-planes in $\B$; every such is the intersection
of $\B$ with either an $m$-dimensional vector subspace of $\R^n$ or an $m$-sphere orthogonal
to the unit sphere $S^{n-1}=b\B$ \cite[Theorem 4.5.3]{Ratcliffe1994}. 
Every hyperbolic $m$-plane $\Lambda\subset \B$ is the image of
the $m$-ball $\B\cap V\cong \B^m$ in an $m$-dimensional vector subspace $V\subset\R^n$
by the orientation preserving {\em hyperbolic translation} $\tau_b\in \Mrm(\B)$ for some $b\in \B$: 
\begin{equation}\label{eq:taub}
	\tau_b(x) = \frac{1}{|b|^2|x|^2+2x\,\cdotp b+1} 
	\left((1-|b|^2)x+(|x|^2+2x\,\cdotp b+1)b\right).
\end{equation}
(See \cite[(4.5.5)]{Ratcliffe1994}. Here, $x\,\cdotp b$ denotes the Euclidean inner product on $\R^n$. 
Note that $\tau_b(0)=b$ and $\tau_0=\Id$.)
Indeed, choosing $b\in \Lambda$ with the smallest Euclidean norm $|b|$ and 
letting $V=T_b\Lambda$ considered as a vector subspace of $\R^n$, we have that 
$\tau_b(\B\cap V)=\Lambda$. To see this, note that for every $x\in \B\cap V$ and $h\in V$ 
we have that $x\,\cdotp b=0$ and hence
\[ 
	\tau_b(x) =  \frac{1}{|b|^2|x|^2+1} \left((1-|b|^2)x+(|x|^2+1)b\right),
	\quad (d\tau_b)_0 h = (1-|b|^2) h.
\]
Since a hyperbolic plane is uniquely determined by a pair $(b,V)$ where $V$ is an $m$-dimensional 
vector subspace of $\R^n$ and $b\in \B$ is orthogonal to $V$, the claim follows. 
We summarise this observation for a later application.

%
%  PROPOSITION: A GOOD FAMILY OF HYPERBOLIC PLANES
%
\begin{proposition}\label{prop:Lambda}
For each pair $(b,V)$, where $V$ is an $m$-dimensional vector subspace of $\R^n$ 
and $b\in \B$ is orthogonal to $V$, there is a unique hyperbolic $m$-plane
$\Lambda(b,V)\subset  \B$ with
\[
	b\in \Lambda(b,V), \quad T_b \Lambda(b,V)=V, \quad |b|=\min\{|x|:x\in \Lambda(b,V)\}.
\]  
We have that $\Lambda(0,V)=\B\cap V$, and if $b\ne 0$ then 
\begin{equation}\label{eq:LbV}
	\Lambda(b,V) = \tau_b(\B\cap V)= \B\cap S^m(a,r), 
\end{equation}
where $a\in \R_+ \cdotp b$ is the unique point with $|a|=\frac{1+|b|^2}{2|b|}$,
$r=\frac{1-|b|^2}{2|b|}$, and $S^m(a,r)$ is the sphere with centre $a$ and radius $r$
in the $(m+1)$-dimensional vector subspace $L\subset \R^n$ spanned by $V$ and $b$.
In particular, $\Lambda(b,V)$ depends real analytically on $(b,V)$.
\end{proposition}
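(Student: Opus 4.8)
\emph{Plan and existence.} I would produce the plane explicitly as $\Lambda(b,V):=\tau_b(\B\cap V)$, verify the three listed properties, identify it with the sphere $\B\cap S^m(a,r)$ of \eqref{eq:LbV}, and deduce uniqueness from the fact that a hyperbolic $m$-plane is pinned down by one of its points together with the tangent space there. For existence: $\B\cap V$ is a hyperbolic $m$-plane with tangent space $V$ at the origin, and $\tau_b\in\Mrm(\B)=\Irm(\B)$ is a hyperbolic isometry, so $\Lambda(b,V)=\tau_b(\B\cap V)$ is again a hyperbolic $m$-plane in $\B$. It contains $b=\tau_b(0)$, and since $(d\tau_b)_0 h=(1-|b|^2)h$ for $h\in V$ (the computation recorded just before the proposition), the map $(d\tau_b)_0$ sends $V$ isomorphically onto $V$, whence $T_b\Lambda(b,V)=V$. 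The minimality of $|b|$ on $\Lambda(b,V)$ I would postpone to the explicit description.

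\emph{The sphere and minimality.} For $b=0$ this is trivial: $\tau_0=\Id$, $\Lambda(0,V)=\B\cap V$, and $0$ is its unique point of least norm. Assume $b\neq0$. By \eqref{eq:taub} the image $\tau_b(x)$ lies in $\span\{x,b\}$, so $\Lambda(b,V)\subset\B\cap L$, where $L=\span(V\cup\{b\})$ is $(m+1)$-dimensional; since $\B\cap L$ with the induced metric is a copy of $H^{m+1}$ inside which $\Lambda(b,V)$ is a hyperbolic $m$-plane, I may replace $\R^n$ by $L$ and assume $n=m+1$, so that $V=b^\perp$ is a hyperplane. Then $\Lambda(b,V)$ is a hyperbolic hyperplane of $\B$; it cannot be of the form $\B\cap W$ for a linear hyperplane $W$, for that would force $W=T_b\Lambda(b,V)=V$, contradicting $b\in\Lambda(b,V)$, $b\perp V$, $b\neq0$. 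Hence $\Lambda(b,V)=\B\cap S^m(a,r)$ with $S^m(a,r)$ orthogonal to $b\B$; invariance under rotations about the axis $\R b$ forces $a\in\R b$, and since $0\notin\Lambda(b,V)$ we have $a=|a|\,b/|b|$ with $|a|>1$. Two conditions now fix $a$ and $r$: orthogonality to $b\B$ gives $|a|^2=1+r^2$, and $b\in S^m(a,r)$ together with $|b|<1<|a|$ gives $|a|-|b|=r$; solving yields $|a|=\frac{1+|b|^2}{2|b|}$, $r=\frac{1-|b|^2}{2|b|}$, i.e.\ $a=\frac{1+|b|^2}{2|b|^2}\,b$, which is \eqref{eq:LbV}. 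Finally, the point of $S^m(a,r)$ nearest the origin is $(|a|-r)\,a/|a|=b$ (using $|a|-r=|b|$), so $|b|=\min_{x\in S^m(a,r)}|x|\le\min_{x\in\Lambda(b,V)}|x|\le|b|$, which is the minimality; the same computation re-checks $T_b S^m(a,r)=(b-a)^\perp=b^\perp=V$.

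\emph{Uniqueness and real-analyticity.} Let $\Lambda$ be any hyperbolic $m$-plane with $b\in\Lambda$ and $T_b\Lambda=V$, and choose an isometry $\phi\in\Mrm(\B)$ with $\phi(b)=0$ (for instance $\phi=\tau_{-b}$, since $\tau_{-b}(b)=0$). Then $\phi(\Lambda)$ is a hyperbolic $m$-plane through $0$; unwinding the definition via the hyperboloid model, such a plane equals $\B\cap W$, where $W=\Pi\cap(\R^n\times\{0\})$ is the $m$-dimensional subspace cut out by the defining time-like $(m+1)$-plane $\Pi\ni\sigma(0)$, and its tangent space at $0$ is exactly $W$. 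Hence $W=d\phi_b(V)$ and $\Lambda=\phi^{-1}(\B\cap d\phi_b(V))$ depends on $(b,V)$ alone; combined with the existence part this shows $\Lambda=\tau_b(\B\cap V)=\Lambda(b,V)$. Real-analytic dependence on $(b,V)$ is then clear: for $b\neq0$ from the closed formulas for $a$ and $r$ obtained above, and globally—including at $b=0$—from the parametrization $(b,y)\mapsto\tau_b(y)$, $y\in\B\cap V$, which is real-analytic by \eqref{eq:taub}. The only genuinely fussy step is the second one, namely the reduction to $n=m+1$ and the elementary bookkeeping that pins down $a$ and $r$; the rest is formal.
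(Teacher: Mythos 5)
Your proof is correct and follows essentially the same route as the paper: exhibit $\Lambda(b,V)=\tau_b(\B\cap V)$, use $(d\tau_b)_0|_V=(1-|b|^2)\Id_V$ for the tangent space, identify the image with $\B\cap S^m(a,r)$ via the orthogonality relation $|a|^2=r^2+1$, and deduce uniqueness from the fact that a hyperbolic $m$-plane is determined by a point and its tangent space there. You simply supply in full several steps that the paper leaves implicit or delegates to Ratcliffe (the reduction to $L=\span(V\cup\{b\})$, why $a\in\R_+ b$, why $0\notin\Lambda(b,V)$, and the explicit uniqueness argument via $\phi=\tau_{-b}$).
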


In the calculation of the centre point $a$ and the radius $r$ of the sphere 
$S^m(a,r)\subset L\cong \R^{m+1}$ in the above proposition, one takes into account that 
$S^{m}(a,r)$ intersects the unit sphere $S^m\subset \R^{m+1}$ orthogonally if and only if
$|a|^2=r^2+1$  \cite[Theorem 4.4.2]{Ratcliffe1994}.

%%%%%%%%%%%%%%%%%%%%%%%%%%%%%%
%
%   THE TWISTOR SPACE OF S4 AND H4
%
%%%%%%%%%%%%%%%%%%%%%%%%%%%%%%

\section{Twistor space of the hyperbolic four-space}\label{sec:H4}

We briefly recall the main facts about twistor spaces pertaining to this paper, 
referring to \cite[Sect.\ 4]{Forstneric2020SM} and the references therein for a more complete account. 

To any  smooth orientable Riemannian four-manifold $(X,g)$ one associates
a pair of twistor fibre bundles $\pi^\pm : Z^\pm \to X$. 
The fibre $\pi^{-1}(x)\cong \CP^1$ over any point $x\in X$ consists of almost
hermitian structures $J:T_xX\to T_xX$, that is, linear maps satisfying $J^2=-\Id$ preserving
the metric $g$ and either agreeing or disagreeing with the orientation of $X$, depending on $\pm$.
The Levi-Civita  connection of $(X,g)$ determines a horizontal subbundle 
$\xi^\pm\subset TZ^\pm$ projecting by $d\pi^\pm$ isomorphically onto the 
tangent bundle of $X$. The total spaces $Z^\pm$ carry almost complex 
structures $J^\pm$ such that $\xi^\pm$ is a $J^\pm$-complex subbundle, for any point $z\in Z^\pm$
(an almost hermitian structure on $T_xX$ for $x=\pi^\pm(z)$)
we have that $d\pi^\pm_z \circ J^\pm_z = z\circ d\pi^\pm_z $, and $J^\pm$ coincides with 
the natural complex structure on the fibres $(\pi^\pm)^{-1}(x)\cong \CP^1$.
The bundles $(Z^\pm,J^\pm)$ only depend on the conformal class of $g$, 
but the horizontal bundle $\xi^\pm$ depends on the choice of $g$ in a given conformal class.

Let $M$ be an open Riemann surface. The {\em Bryant correspondence} says that conformal 
superminimal immersions $f:M\to X$ of positive or negative spin (see Def.\ \ref{def:SM})
are the twistor projections of horizontal (tangent to $\xi^\pm$) 
holomorphic immersions $F^\pm:M\to Z^\pm$, the sign depending on the spin of $f$
(see Bryant \cite[Theorems B, B']{Bryant1982JDG}, Friedrich \cite[Proposition 4]{Friedrich1984},
and the summary in \cite[Theorem 4.6]{Forstneric2020SM}). 
According to Atiyah et al.\ \cite[Theorem 4.1]{AtiyahHitchinSinger1978}, 
the almost complex structure $J^\pm$ is integrable (i.e., $(Z^\pm,J^\pm)$ is a 
complex manifold) if and only if the Weyl tensor $W=W^++W^-$ 
(the conformally invariant part of the curvature tensor of $g$)
satisfies $W^+=0$ ($g$ is anti-self-dual) or $W^-=0$ ($g$ is self-dual), respectively.
If either of these conditions hold then the corresponding 
horizontal subbundle $\xi^\pm\subset TZ^\pm$ 
is a holomorphic subbundle if and only if $g$ is an Einstein metric, and in such case $\xi^\pm$ is 
a holomorphic contact bundle if and only if $g$ has nonzero (constant) scalar curvature;
see Salamon \cite[Theorem 10.1]{Salamon1983} and Eells and Salamon \cite[Theorem 4.2]{EellsSalamon1985}.

The spherical metric on $S^4$ and the hyperbolic metric on $H^4$ are conformally flat
Einstein metrics of curvature $\pm1$, so their twistor spaces are complex contact three-manifolds. 
It was shown by Penrose \cite[Sect.\ VI]{Penrose1967} and 
Bryant \cite[Sect.\ 1]{Bryant1982JDG} that both twistor spaces $Z^\pm(S^4)$ 
can be identified with the complex projective space $\CP^3$ with the Fubini-Study metric 
such that the horizontal bundle is the holomorphic contact bundle $\xistd\subset T\CP^3$ given by 
$\alpha$ \eqref{eq:alpha}. An elementary proof is given in \cite[Sect.\ 6]{Forstneric2020SM}.
It is also known (see Friedrich \cite{Friedrich1997}) that the twistor spaces 
$Z^\pm$ of the hyperbolic space $H^4=(\B,g_h)$ \eqref{eq:gh} 
can be identified with the domain $\Omega\subset\CP^3$ \eqref{eq:Omega} 
with the contact structure $\xi$ defined by $\beta$ \eqref{eq:beta}. Since we shall need a 
more precise understanding of the relevant geometry, % when proving Theorem \ref{th:main},
we recall the main facts. 

Let $\H$ denote the field of quaternions. An element of $\H$ is written uniquely as
\[ 
	q= x_1+x_2\igot  + x_3\jgot  + x_4 \kgot  = z_1+z_2\jgot,
\] 
where $(x_1,x_2,x_3,x_4)\in\R^4$, $z_1=x_1+x_2 \igot \in\C,\ z_2=x_3+x_4\igot \in \C$, and  
$\igot,\jgot,\kgot$ are the quaternionic units. We identify $\R^4$ with $\H$ using $1,\igot,\jgot,\kgot$ 
as the standard basis. Recall that 
\[
	\bar q=x_1-x_2\igot  - x_3\jgot  - x_4 \kgot,\quad 
	q\bar q=|q|^2=\sum_{i=1}^4 x_i^2, \quad 
	q^{-1}=\frac{\bar q}{|q|^2}\ \text{if}\ q \ne 0,\quad \overline{pq}=\bar q \bar p. 
\]
We identify the quaternionic plane $\H^2$ with $\C^4$ by 
\begin{equation}\label{eq:H2C4}
	\H^2\ni q=(q_1,q_2)=(z_1+z_2\jgot,z_3+z_4\jgot) = (z_1,z_2,z_3,z_4)=z \in \C^4.
\end{equation}
Write $\H^2_*=\H^2 \setminus \{0\}\cong \C^4_*$. The situation is described by the following diagram
\[ 
	\xymatrixcolsep{3pc}
	\xymatrix{
	\H^2_* \ar[r]^{\phi_1} \ar[dr]^{\phi} & \CP^3 \ar[d]_{\pi}  
	\\ 
	\wh\R^4 \ar@{<->}[r]^{\cong} & \H\P^1 \ar[r]^{\psi} & S^4
	}
\]
where $\wh\R^4=\R^4\cup\{\infty\}$,  
$\phi_1:\H^2_*\cong \C^4_*\to \CP^3$ is the canonical projection with fibre $\C^*$
sending $q\in \H^2_*$ to the complex line $\C q\in \CP^3$, $\pi:\CP^3\to \HP^1$ is the fibre bundle 
sending a complex line $\C q$ $(q\in \H^2_*)$ to the quaternionic line $\H q=\C q \oplus \C \jgot q$,
and $\phi=\pi\circ\phi_1$ sends $q\in \H^2_*$ to $\H q\in \HP^1$. The fibre $\pi^{-1}(\pi(\C q))$ is the 
linear rational curve $\CP^1\subset \CP^3$ of complex lines contained in the quaternionic line
$\H q$. Thus, $\HP^1$ is the one-dimensional quaternionic projective space which we identify with 
$\H\cup\{\infty\} = \wh\R^4$ such that the quaternionic line $\{0\}\times \H$ corresponds to $\infty$. 
The map $\psi:\wh\R^4\stackrel{\cong}{\lra} S^4$ is the stereographic projection \eqref{eq:psi}.
With these identifications, the projection $\pi:\CP^3\to \HP^1$ is the twistor bundle $Z^+(S^4)\to S^4$. 
We get the negative twistor bundle $Z^-(S^4)\to S^4$ by reversing the orientation on $S^4$; 
for example, by replacing the stereographic projection $\psi$ by the one
sending $\infty$ to $\ngot=(0,0,0,0,1)\in S^4\subset \R^5$. Using the coordinates \eqref{eq:H2C4} we have
\[
	\phi(q_1,q_2) = q_1^{-1}q_2=\frac{1}{|q_1|^2}\bar q_1 q_2 
	= \frac{1}{|z_1|^2+|z_2|^2}\left(\bar z_1 z_3+z_2\bar z_4,\bar z_1z_4-z_2\bar z_3\right).
\]
Identifying $\HP^1\cong \R^4\cup \{\infty\}=\C^2\cup \{\infty\}=:\wh \C^2$ and using complex coordinates 
$w=(w_1,w_2)\in\C^2$, this shows that the twistor projection $\pi:\CP^3\to \wh \C^2$ is given by
\begin{equation}\label{eq:pi}
	w_1=\frac{\bar z_1 z_3+z_2\bar z_4}{|z_1|^2+|z_2|^2},\quad\ 
	w_2=\frac{\bar z_1z_4-z_2\bar z_3}{|z_1|^2+|z_2|^2},	
	\quad\ |w|^2=\frac{|z_3|^2+|z_4|^2}{|z_1|^2+|z_2|^2}= \frac{|q_2|^2}{|q_1|^2}.
\end{equation}
The maximal subgroup $G_s\subset \GL_4(\C)$ 
which passes down to the group of biholomorphic isometries of $\CP^3$ in the Fubini-Study metric, 
and further down to the group of isometries of $\HP^1\cong \wh\R^4$ in the spherical metric 
$g_s = \frac{4|x|^2}{\left(1+|x|^2\right)^2}$ $(x\in\R^4)$, is the group preserving 
the quaternionic inner product on $\H^2$ given by 
\begin{equation}\label{eq:IP1}
	\H^2\times \H^2\ni (p,q)\ \longmapsto\ p \bar q^t = p_1\bar q_1+p_2\bar q_2 \in\H.
\end{equation}
(We consider elements of $\H^2$ as row vectors acted upon by right multiplication.) Writing
\begin{equation}\label{eq:pq}
	p=(z_1+z_2\jgot,z_3+z_4\jgot)=z,\quad  q=(v_1+v_2\jgot,v_3+v_4\jgot)=v, 
\end{equation}
a calculation gives
\begin{equation}\label{eq:pbarq}
	p \bar q^t = z \, \overline v^t + \alpha_0(z,v) \jgot,\quad\ \alpha_0(z,v)=z_2v_1-z_1v_2+z_4v_3-z_3v_4. 
\end{equation}
Then $\alpha_0(z,dz)=\alpha$ is the contact form \eqref{eq:alpha}. 
Denoting by $J_0\in \SU(4)$ the matrix having diagonal blocks 
$\left(\begin{smallmatrix}0&-1\\1&0\end{smallmatrix}\right)$ and zero off-diagonal blocks, 
we have $\alpha_0(z,v)=z J_0 v^t$ and hence
\begin{equation}\label{eq:Gs}
	G_s = \{A\in \Urm(4): AJ_0A^t=J_0\} = \Urm(4) \cap\Sp_2(\C),
\end{equation}
where $\Sp_2(\C)$ denotes the complexified symplectic group.

%
%   THE TWISTOR SPACE OF THE HYPERBOLIC SPACE
%
We now consider the twistor space $Z^+$ of the hyperbolic space $H^4=(\B,g_h)$.
From \eqref{eq:pi} we see that the preimage of $\B$ by the twistor projection 
$\pi:\CP^3\to \wh\C^2$ is the domain 
\begin{equation}\label{eq:Omega2}
	\Omega=\pi^{-1}(\B) = \left\{[z_1:z_2:z_3:z_4]\in \CP^3: |z_1|^2+|z_2|^2 > |z_3|^2+|z_4|^2\right\}.
\end{equation}
Likewise, the preimage $\Omega'=\pi^{-1}(\B')\subset \CP^3$ obtained
by reversing the inequality in \eqref{eq:Omega2} is the twistor space of the complementary
four-ball $\B'$ \eqref{eq:Bprime} with the hyperbolic metric. The common boundary of these 
two domains is the cone
\begin{equation}\label{eq:Gamma}
	\Gamma=\left\{[z_1:z_2:z_3:z_4]\in \CP^3: |z_1|^2+|z_2|^2 = |z_3|^2+|z_4|^2\right\}
\end{equation}
whose projection $\pi(\Gamma)$ is the unit sphere $b\B=b\B'=S^3\subset \R^4$. 
The maximal subgroup $G_h$ of $\GL_4(\C)$ descending to a group of holomorphic 
automorphisms of $\CP^3$, and further down to the group of isometries 
$\Irm(\B)=\Irm(\B')\subset \Mrm(\wh \R^4)$ of the hyperbolic balls $\B$ and $\B'$, 
consists of all $A\in \GL_4(\C)$ preserving the indefinite quaternionic inner product
\[ 
	\H^2\times \H^2\ni (p,q)\ \longmapsto\ p* q = p_1\bar q_1-p_2\bar q_2 \in\H.
\] 
Writing $(p,q)$ in the complex notation \eqref{eq:pq} we have that 
\begin{equation}\label{eq:quadratic2}
	p * q = \left(z_1\bar v_1 + z_2 \bar v_2 - z_3\bar v_3-z_4\bar v_4\right) + 
	\left(z_2v_1-z_1v_2 - z_4v_3 + z_3v_4\right) \jgot. 
\end{equation}
The subgroup of $\GL_4(\C)$ preserving the first component on the right hand side is
$\Urm(2,2)$. Let $\beta_0(z,v)$ denote the coefficient of $\jgot$ in \eqref{eq:quadratic2}. 
Note that  $\beta_0(z,dz)=\beta$ is the form \eqref{eq:beta}.
Let $J_1\in \SU(4)$ be the matrix having the diagonal blocks 
$\left(\begin{smallmatrix}0&-1\\1&0\end{smallmatrix}\right)$,  
$\left(\begin{smallmatrix}0&1\\-1&0\end{smallmatrix}\right)$ and zero off-diagonal blocks.
Then $\beta_0(z,v)=z J_1 v^t$, so the group we are looking for is
\begin{equation}\label{eq:Gh}
	G_h = \{A\in \Urm(2,2)  : AJ_1A^t=J_1\}.
\end{equation}
This also shows that the horizontal bundle of the twistor projections 
$\pi:\Omega\to \B$ and $\pi:\Omega'\to \B'$ is the kernel $\xi\subset T\CP^3$ of 
the $1$-form $\beta$ \eqref{eq:beta}, a contact bundle. 

For any $p=(p_1,p_2)\in \H^2_*$ the fibre $\pi^{-1}(\phi(p))\subset\CP^3$ is the space of all
complex lines contained in the quaternionic line $\H p$. % (a copy of $\CP^1$). 
The tangent space to this fibre at any point is spanned by a vector $q=a p$ for some 
imaginary quaternion $a \in \H$ with $|a|=1$. From \eqref{eq:quadratic2} we get
\[
	p* q = p_1\overline{ap_1} - p_2\overline{ap_2}
		     = p_1\bar p_1 \bar a - p_2\bar p_2 \bar a
		     = (p* p)\overline a.
\]
This vanishes for all $a\in \H$ precisely when $|p_1|^2=|p_2|^2$ which is equivalent to 
$\phi_1(p)\in \Gamma=\pi^{-1}(b\B)$ \eqref{eq:Gamma}. It follows that for every point $x\in b\B=b\B'$ 
the fibre $\pi^{-1}(x)\subset \Gamma$ is a $\xi$-Legendrian curve.
This is in strong contrast to the situation for the contact bundle $\xi_{\rm std}$ which is transverse 
to all fibres of $\pi$. This difference reflects the fact that the hyperbolic metrics on $\B$ and $\B'$ 
blow up along their common boundary sphere. 

The above discussion in illustrated by the following diagram, where $G_h$ is the
group \eqref{eq:Gh} and $\Mrm(\B)$ is the M\"obius group (the isometry group)
of $\B$ introduced in Sect.\ \ref{sec:H4intro}.
\[ 
	\xymatrixcolsep{4pc}
	\xymatrix{
	& \H^2_* \ar[r]^{A\,\in\, G_h} \ar[d]_{\phi_1}  & \H^2_* \ar[d]^{\phi_1} & 
	\\
	\Omega\, \ar@{^{(}->}[r] \ar[d]_{\pi} & \CP^3  \ar[r]^{\wt A \,\in\, \PGL(4)} \ar[d]_{\pi} 
	& \CP^3  \ar[d]^{\pi}   &   \ \Omega \ar@{_{(}->}[l] \ \ar[d]^{\pi} 
	\\
	\B \, \ar@{^{(}->}[r] & \wh\R^4  \ar[r]^{A'\,\in\, \Mrm(\B)}  	
	& \wh\R^4 &  \ \B \ \ar@{_{(}->}[l] 
	}
\]

The negative twistor bundle $Z^-(\B)$ is the positive twistor bundle of $\B$ with the opposite 
orientation; it can still be identified with $(\Omega,\beta)$. There is however no need to consider it 
since an orientation reversing isometry $\tau:\B\to \B$ (for example, a reflection in a hyperplane
of $\R^4$ through the origin) maps any conformal superminimal surface $f:M\to \B$ of negative spin to 
a conformal superminimal surface $\tau\circ f: M\to \B$ of positive spin and vice versa; 
hence it suffices to consider superminimal surfaces of positive spin.

%
%  PROPOSITION: HYPERBOLIC PLANES ARE SUPERMINIMAL
%
\vspace{2mm}
\begin{proposition}\label{prop:Lz}
Every oriented hyperbolic $2$-plane $\Lambda(b,V)\subset  \B=\B^4$ in Proposition \ref{prop:Lambda}  
is a totally geodesic superminimal surface in $(\B,g_h)$, hence a superminimal surface of both 
positive and negative spin. Its twistor lift to the domain $\Omega\subset \CP^3$ \eqref{eq:Omega2} 
is the intersection of $\Omega$ with a linear $\xi$-Legendrian rational curve $\CP^1\subset \CP^3$.
\end{proposition}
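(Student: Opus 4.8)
The plan is to verify the statement in two stages: first the intrinsic geometric claim that every $\Lambda(b,V)$ is totally geodesic and hence superminimal of both spins, and then the twistor-theoretic claim identifying its lift. For the first stage, I would use Proposition \ref{prop:Lambda}: since the isometry group $\Irm(\B)=\Mrm(\B)$ acts transitively on pairs $(b,V)$ with $V$ a $2$-plane and $b\perp V$, and since $\tau_b(\B\cap V)=\Lambda(b,V)$ with $\tau_b$ a hyperbolic translation (an isometry), it suffices to treat the model case $\Lambda(0,V)=\B\cap V$ for a fixed $2$-dimensional vector subspace $V\subset\R^4$. That this totally geodesic disc is a hyperbolic plane, hence fixed pointwise by the reflection of $\B$ in $V$ (an isometry), shows its second fundamental form vanishes identically; consequently the curve $I_x(v)$ in \eqref{eq:Ixv} reduces to $\{0\}$ for every $x$ and $v$. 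By Definition \ref{def:SM} the spin condition is then void, so $\Lambda(b,V)$ is superminimal of both positive and negative spin (and in particular minimal).

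For the second stage I want to exhibit the twistor lift of $\B\cap V$ explicitly and recognise it as (the $\Omega$-part of) a linear $\xi$-Legendrian $\CP^1$. The twistor lift of a superminimal surface of positive spin is the unique horizontal holomorphic curve in $\Omega$ projecting onto it via $\pi$; for a totally geodesic $\Lambda$ the lift is classically the set of complex structures $J_x$ compatible with $g_h$ and the orientation that preserve $T_x\Lambda\subset T_x\B$, as $x$ ranges over $\Lambda$. I would choose coordinates on $\R^4\cong\H$ so that $V$ is, say, the $\C$-line $\{z_3=z_4=0\}\subset\wh\C^2$ inside $\wh\R^4$ under the identification $\HP^1\cong\wh\C^2$ of \eqref{eq:pi}; then $\pi^{-1}(V)$ is cut out in $\CP^3$ by the vanishing of the numerators in \eqref{eq:pi}, i.e. $\bar z_1 z_3+z_2\bar z_4=0$ and $\bar z_1 z_4-z_2\bar z_3=0$. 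One checks (a short computation with the two equations and their conjugates) that on $\Omega$, where $|z_1|^2+|z_2|^2>0$, these force $(z_3,z_4)=\lambda(\bar z_2,-\bar z_1)$ for a scalar $\lambda$, and writing this out against the fibre structure the lift is seen to lie on a single linear $\CP^1\subset\CP^3$ — concretely the projectivisation of a $2$-dimensional complex subspace of $\C^4$. Finally I would check that this $\CP^1$ is $\xi$-Legendrian, i.e. that the homogeneous $1$-form $\beta$ of \eqref{eq:beta} pulls back to zero on it; this follows because, by \eqref{eq:quadratic2}, the corresponding complex plane is $\beta_0$-isotropic (equivalently $J_1$-isotropic), and a complex line projectivising such a plane is automatically Legendrian for the contact form $\beta$. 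The general case $\Lambda(b,V)$ is then obtained by applying an element $\wt A\in\PGL(4)$ coming from $A\in G_h$ (see the diagram preceding the proposition), which carries linear $\xi$-Legendrian $\CP^1$'s to linear $\xi$-Legendrian $\CP^1$'s and intertwines $\pi$ with an isometry $A'\in\Mrm(\B)$, so it maps the lift of $\B\cap V$ to the lift of $A'(\B\cap V)=\Lambda(b,V)$.

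I expect the main obstacle to be the middle computation: pinning down precisely which linear $\CP^1$ arises as $\pi^{-1}(V)\cap\Omega$ and confirming its Legendrian property directly from $\beta$, because the twistor fibration \eqref{eq:pi} is quadratic and not holomorphic in the ambient coordinates, so one must be careful not to conflate the (antiholomorphic-looking) fibre equations with genuine complex-linear constraints. The cleanest route is probably to avoid coordinates entirely for the Legendrian check and instead argue structurally: the fibre of $\pi$ over any point of $\B$ is a linear $\CP^1$, the twistor lift of a totally geodesic $\Lambda$ meets each such fibre in exactly one point and is itself a holomorphic rational curve of degree one (since it is the graph of a section of $\pi|_\Omega$ over $\Lambda\cong\D$ with the right boundary behaviour), hence a linear $\CP^1$; and horizontality of the lift is exactly the statement that it is $\xi$-Legendrian, which is the content of the Bryant correspondence recalled in Section \ref{sec:H4}. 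I would present the explicit model computation only as far as needed to make the degree-one and isotropy claims transparent, and otherwise lean on the equivariance under $G_h$ and on the uniqueness of horizontal lifts.
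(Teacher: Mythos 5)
Your overall strategy --- reduce to the model plane $\B\cap V$ and transport the conclusion by the lifted isometry group --- is exactly the paper's, and your first half is fine: the reflection-in-$V$ argument for total geodesy is a clean alternative to the paper's direct verification that circles in $V$ orthogonal to $b\B$ are geodesics of $(\B,g_h)$, and the conclusion that every circle $I_x(v)$ degenerates to $\{0\}$, so both spin conditions are void, is correct.

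The identification of the twistor lift is where the proposal has a genuine gap. First, the set you write down is wrong: imposing \emph{both} equations $\bar z_1 z_3+z_2\bar z_4=0$ and $\bar z_1 z_4-z_2\bar z_3=0$ forces $(|z_1|^2+|z_2|^2)z_3=0$ and likewise for $z_4$, hence $z_3=z_4=0$ on $\Omega$; this is the single fibre $\pi^{-1}(0)$, not $\pi^{-1}(V)$ (which is cut out by the one complex equation $w_2=0$ and has real dimension $4$), and in any case the lift is a holomorphic \emph{section} of $\pi$ over $\Lambda$, not a union of fibres. Your claimed normal form $(z_3,z_4)=\lambda(\bar z_2,-\bar z_1)$ satisfies neither equation. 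Second, the structural fallback does not close the gap: the lift is a priori only a holomorphic disc (a section over $\Lambda\cong\D$), and ``graph of a section with the right boundary behaviour, hence a degree-one rational curve, hence a line'' is not an argument --- degree is undefined until you know the disc closes up to a projective curve, and a holomorphic disc meeting each fibre it meets in one point need not lie on a line. The paper sidesteps all of this by exhibiting the lift explicitly: for $V=\C\times\{0\}$ and $f(\zeta)=(\zeta,0)$, the lift is $F(\zeta)=[1:0:\zeta:0]$ (verified against \eqref{eq:pi}), which visibly extends to the line $\{z_2=z_4=0\}\cong\CP^1$ on which $\beta$ vanishes identically; uniqueness of horizontal lifts then identifies $F|_\D$ with the twistor lift. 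With that model computation in place (plus the orientation-reversed lift $[0:1:0:\zeta]$), your equivariance step via $G_h$ and $\SO(4)$ correctly handles the general $\Lambda(b,V)$.
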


\begin{proof}
For any two-plane $0\in V\subset \R^4$, $\Lambda(0,V)=\B\cap V$ is a hyperbolic disc in the metric $g_h$. 
Given a circle $C\subset V$ intersecting $b\B\cap V$ orthogonally in $V$, $C$ also intersects
$b\B$ orthogonally in $\R^4$, so $C\cap \B$ is a geodesic of $(\B,g_h)$.
This shows that $\B\cap V$ is a totally geodesic surface in $\B$, hence superminimal
with all circles $I_x(v)$ in Def.\ \ref{def:SM} reducing to points. (In particular, $\B\cap V$ with 
any orientation is superminimal of both $\pm$ spin.) 
Taking $V=\R^2\times \{0\}^2=\C\times \{0\}$ and the parameterization 
$f(\zeta)=(\zeta,0)\in \B\cap V$ for $\zeta\in \D=\{|\zeta|<1\}$, we see from  \eqref{eq:pi} that the 
holomorphic $\xi$-Legendrian embedding
\[
	F:\CP^1=\C\cup\{\infty\} \hra \CP^3,\quad\  F(\zeta) = [1:0:\zeta:0] 
\]
restricted to the disc $\D$ is the twistor lift of $f$. (Note that $F$ is also 
$\xistd$-Legendrian, so this particular map $f$ is also a superminimal surface in $S^4$
with the spherical metric.) Reversing the orientation on $V=\R^2\times \{0\}^2$, a conformal 
orientation preserving parameterization of $\B\cap V$ is $f(\zeta)=(\bar\zeta,0)$ $(\zeta\in\D)$ 
with the twistor lift $F(\zeta) = [0:1:0:\zeta]\in\Omega$.

Any other hyperbolic surface $\Lambda(b,V)$ can be obtained from $\B\cap (\R^2\times \{0\}^2)$ by an 
orientation preserving isometry of $\B$. Indeed, we get other planes through $0$ by orthogonal rotations 
in $\SO(4)$, and for $0\ne b\in \B$ we have that $\Lambda(b,V) = \tau_b(\B\cap V)$ (cf.\ \eqref{eq:LbV})
where $\tau_b\in\Mrm(\B)$ is the orientation preserving hyperbolic translation \eqref{eq:taub}.	
Since every orientation preserving isometry of $\B$ lifts to a holomorphic contactomorphism 
of $(\CP^3,\xi)$ preserving the domain $\Omega$, the same conclusion holds for the twistor lift of every surface 
$\Lambda(b,V)$.
\end{proof}

%%%%%%%%%%%%%%%%%%%%%%%%%%%%%%%%%%%%
%
%   THE RIEMANN-HILBERT METHOD FOR LEGENDRIAN CURVES IN CP3
%
%%%%%%%%%%%%%%%%%%%%%%%%%%%%%%%%%%%%

\section{The Riemann-Hilbert method for Legendrian curves} % in $\CP^{2n+1}$}
\label{sec:RH}

In this section we develop the Riemann-Hilbert deformation method for holomorphic 
Legendrian curves in complex projective spaces; see Theorem \ref{th:RH}.

The Riemann-Hilbert deformation method for holomorphic curves and related geometric 
objects is a very useful tool in global constructions of such object having certain additional 
properties. A particularly useful feature of this technique is that it offers a 
precise geometric control of the placement of the object into the ambient space; 
this is especially helpful if one aims to preserve its conformal (complex) structure without 
having to cut away pieces of it during an inductive construction. 
It is therefore not surprising that this technique has been used in constructions 
of proper holomorphic maps from bordered Riemann surfaces into an optimal class of 
complex manifolds and complex spaces (see \cite{DrinovecForstneric2007DMJ} and the references therein), 
of complete holomorphic curves which are either proper in a given domain or
contained in a small neighbourhood of a given curve (see \cite{AlarconForstneric2013MA}), 
in the Poletsky theory of disc functionals (see \cite{DrinovecForstneric2012IUMJ}), and others. 
In recent years this method has been adapted to several other geometries, 
in particular to conformal minimal surfaces in Euclidean spaces $\R^n$ and holomorphic 
null curves in $\C^n$ for any $n\ge 3$ (see the survey \cite{AlarconForstneric2019JAMS}) and 
to holomorphic Legendrian curves in $\C^{2n+1}$ with its standard complex contact structure 
(see \cite{AlarconForstnericLopez2017CM}). 

The following is the main result of this section. 
Since the contact structure on $\CP^{2n+1}$ is unique up to holomorphic contactomorphisms 
(see \cite[Corollary 2.3]{LeBrunSalamon1994IM}), the precise choice of the contact bundle is irrelevant.

%
%  RIEMANN-HILBERT FOR LEGENDRIAN CURVES
%
\begin{theorem}[The Riemann-Hilbert method for Legendrian curves in $\CP^{2n+1}$]\label{th:RH}
Assume that $M$ is a compact bordered Riemann surface, 
$I\subset bM$ is an arc which is not a boundary component of $M$,
$f: M \to\CP^{2n+1}$ is a Legendrian map of class $\Ascr^1(M)=\Cscr^1(M)\cap \Oscr(\mathring M)$, 
and for every $u\in bM$ the map $\cd \ni v\mapsto F(u,v) \in \CP^{2n+1}$
is a Legendrian disc of class $\Ascr^1(\cd)$ depending continuously on $u\in bM$ such that 
$F(u,0)=f(u)$ for all $u\in bM$ and $F(u,v)=f(u)$ for all $u\in bM\setminus I$ and $v\in\cd$. 
Assume that there is a projective hyperplane $H \subset \CP^{2n+1}$ which avoids 
the compact set $\bigcup_{u\in I}F(u,\cd)$. 
Given a number $\epsilon>0$ and a neighbourhood $U\subset M$ of the arc $I$, there exist 
a holomorphic Legendrian map $\tilde f : M \to\CP^{2n+1}$ and a neighbourhood $V\Subset U$ of $I$ 
with a smooth retraction $\tau : V\to V\cap bM$ such that the following conditions hold.
\begin{enumerate}[\rm (i)] 
\item  $\dist(\tilde f(u),f(u)) <\epsilon$ for all $u\in M\setminus V$.
\vspace{1mm}
\item  $\dist(\tilde f(u),F(u,b\D))<\epsilon$ for all $u \in bM$.
\vspace{1mm}
\item  $\dist(\tilde f(u),F(\tau(u),\overline{\D}))<\epsilon$ for all $u\in V$.
\vspace{1mm}
\item $\tilde f$ agrees with $f$ to a given finite order on a given finite set of points in $\mathring M$. 
\end{enumerate}
\end{theorem}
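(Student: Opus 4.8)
The plan is to reduce the projective Riemann--Hilbert problem to the affine one treated in \cite{AlarconForstnericLopez2017CM}, exploiting the hypothesis that the compact piece $K:=\bigcup_{u\in I}F(u,\cd)$ avoids a projective hyperplane $H$. Working with homogeneous coordinates on $\CP^{2n+1}=\P(\C^{2n+2})$, choose the hyperplane $H=\{z_0=0\}$ after a projective contactomorphism (legitimate since the contact structure is unique up to such maps, by \cite[Corollary 2.3]{LeBrunSalamon1994IM}); then $\CP^{2n+1}\setminus H\cong\C^{2n+1}$ carries the standard contact structure induced from $\alpha$, and $K$ together with a slightly larger neighbourhood lies in this affine chart. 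The given Legendrian map $f\colon M\to\CP^{2n+1}$ need not take values in this chart globally, but near the arc $I$ and on the neighbourhood $U\supset I$ we may shrink $U$ so that $f(U)$ and all the discs $F(u,\cd)$ for $u\in I$, being close to $K$, lie in $\C^{2n+1}\setminus H'$ for a second auxiliary hyperplane, hence in a fixed affine chart. Thus the nontrivial deformation is supported in a coordinate patch where the affine theory applies verbatim.

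First I would lift everything to the homogeneous picture: represent $f$ locally near $I$ by a holomorphic Legendrian map $\hat f\colon U\to\C^{2n+1}$ (a local section of the $\C^*$-bundle $\C^{2n+2}_*\to\CP^{2n+1}$, normalized so the first coordinate is $1$), and similarly represent the deformation family $F$ over $I$ by affine Legendrian discs $\hat F(u,\cdot)\colon\cd\to\C^{2n+1}$, depending continuously on $u\in I$, with $\hat F(u,0)=\hat f(u)$; over $bM\setminus I$ the family is constant so no lift issue arises there. Then I would apply the affine Riemann--Hilbert theorem for Legendrian curves in $\C^{2n+1}$, \cite[Theorem 3.1 or its analogue]{AlarconForstnericLopez2017CM}, to the restriction of $\hat f$ to a compact bordered-surface neighbourhood of $I$ inside $U$, with boundary data $\hat F$. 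That produces a holomorphic Legendrian map $\tilde g$ on that neighbourhood, uniformly close to $\hat f$ off a small collar $V$ of $I$, satisfying the three distance estimates (i)--(iii) in the affine metric, and with prescribed jets at the given finite point set (which we place in $\mathring M\cap U$, or handle the remaining points by a separate interpolation step). Projecting $\tilde g$ back down to $\CP^{2n+1}$ and gluing it to the unchanged $f$ outside $V$ — the gluing being trivial because on $U\setminus V$ the affine approximation is within $\epsilon$ of $f$ and we can use a cutoff at the homogeneous level that preserves the Legendrian condition, or more cleanly, invoke the Mergelyan/gluing mechanism already available for Legendrian curves — yields $\tilde f\colon M\to\CP^{2n+1}$.

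The main obstacle, and the point requiring genuine care, is the passage between the affine distance estimates and the intrinsic Fubini--Study (or chordal) distance on $\CP^{2n+1}$ claimed in (i)--(iii), together with ensuring the glued map is globally holomorphic and globally Legendrian. On a compact subset of the affine chart the two metrics are comparable, so shrinking $U$ appropriately converts affine $\epsilon'$-closeness into projective $\epsilon$-closeness; the retraction $\tau\colon V\to V\cap bM$ is inherited directly from the affine construction. The gluing is the more delicate issue: outside $V$ we want $\tilde f=f$ exactly (or at least $f$ unchanged away from $U$), while inside we want $\tilde g$; since $\tilde g$ agrees with $f$ to high approximation on the overlap annulus and both are Legendrian, one corrects the discrepancy by a small Legendrian perturbation supported in the annulus — here one uses that the space of Legendrian maps is governed by a determined elliptic problem and admits such local surgery, exactly as in \cite{AlarconForstnericLopez2017CM,Forstneric2020SM}. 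Keeping track of the interpolation conditions (iv) through this surgery is routine provided the interpolation points are kept away from $V$, which we may arrange. I expect no new phenomenon beyond bookkeeping once the affine black box is in place; the conceptual content is entirely in the observation that the hyperplane $H$ lets us localize the deformation to an affine chart.
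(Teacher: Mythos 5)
Your reduction to an affine chart near the arc $I$ is indeed how the paper's proof begins (one takes $H=\{z_0=0\}$ missing $\bigcup_{u\in I}F(u,\cd)$ and a simply connected domain $D\subset U$ around $I$ with $f(D)\cap H=\varnothing$), and applying the affine Riemann--Hilbert lemma of \cite{AlarconForstnericLopez2017CM} on that piece is also correct. The genuine gap is the gluing step, which you defer to ``a small Legendrian perturbation supported in the annulus'' and ``the Mergelyan/gluing mechanism already available''. No such local surgery exists: holomorphic maps admit no cutoffs, and two holomorphic Legendrian curves that merely approximately agree on an overlap annulus do not patch into one. Moreover, even after gluing the holomorphic data (which requires solving a Cousin-I problem with bounds on a Cartan pair $(M',D)$, not an elliptic perturbation argument), the Legendrian condition is \emph{global}: in the normal form $f=[1:gh-2\int g\,dh:g:h]$ it is the exactness of the meromorphic $1$-form $g\,dh$, i.e.\ the vanishing of its periods over a homology basis $C_1,\dots,C_l$ of $M$ together with the residue conditions \eqref{eq:Res} at the poles $P=f^{-1}(H)$. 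Any gluing perturbs these periods, and on a multiply connected $M$ there is no local correction.

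The paper closes this gap by embedding $f$ into a period-dominating holomorphic spray $\hat g(\cdot,\zeta)=g+\sum_k\zeta_k g_k$ \emph{before} performing the Riemann--Hilbert modification, carrying the entire $\zeta$-family through the affine Riemann--Hilbert lemma on $D$ and through the Cousin-I gluing with $M'$, and then applying the implicit function theorem to the period map $\zeta\mapsto\Pcal\bigl(H_2(\cdot,\zeta),H_3(\cdot,\zeta)\bigr)$ to find a parameter $\zeta'$ near $0$ at which all periods vanish, so that the glued curve is exactly (not approximately) Legendrian. A second point you pass over: since $f(M)$ need not avoid any hyperplane, the global representation $(g,h)$ is meromorphic with simple poles at $P\subset\mathring M$, and the spray and the gluing must interpolate $(g,h)$ to second order at $P$ so that the residue conditions survive; this is where the Bertini-type transversality adjustment of $H$ enters. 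Your outline becomes a proof only once these two mechanisms --- the period-dominating spray with the implicit function theorem, and the Cousin-I gluing with jet interpolation at the poles --- are supplied.
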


Recall that a map from a compact bordered
Riemann surface $M$ to a complex manifold $X$ is called holomorphic if it extends to a holomorphic map
$U\to X$ from an open neighbourhood of $M$ in an ambient Riemann surface $R$.

\begin{proof}
We adapt the proof of \cite[Theorem 3.3]{AlarconForstnericLopez2017CM} (which applies to 
holomorphic Legendrian curves in $\C^{2n+1}$) to the projective case.
For simplicity of notation we consider the case $n=1$; the same proof applies in general
with obvious modifications.

We begin with a few reductions. We may assume that $M$ is connected,
$f$ is nonconstant, and its image $f(M)$ is not contained in the affine chart $\CP^3\setminus H$,
for otherwise the result follows from  \cite[Theorem 3.3]{AlarconForstnericLopez2017CM}.
The special case when $M=\cd$ and the entire configuration is contained in an 
affine chart $\C^3\subset\CP^3$ is furnished by \cite[Lemma 3.2]{AlarconForstnericLopez2017CM}.

By Bertini's theorem (see \cite[p.\ 150]{GoreskyMacPherson1988} or \cite{Kleiman1974}
and note that this is an application of the transversality theorem) 
we can move the hyperplane $H$ slightly to ensure that it intersects $f(M)$ transversely 
and it does not meet the compact set $f(bM)\cup \bigcup_{p\in I}F(p,b\D)$.  
In particular, $f$ is an immersion at any point $p\in \mathring M$ with $f(p)\in H$; 
the set $P$ of all such points is finite and contained in $\mathring M$. 
Choose a closed smoothly bounded simply connected domain $D\subset U$ 
such that $D$ is a neighbourhood of the arc $I$ and $f(D)\cap H=\varnothing$. 
By denting $bM$  inward along a neighbourhood of the arc $I$ we find a smoothly 
bounded compact domain $M' \subset M$ diffeotopic to $M$ and such that 
\[ %begin{equation} \label{eq:CP}
	M=M'\cup D\quad \text{and}\quad 
	\overline{M'\setminus D} \, \cap \, \overline {D\setminus M'}=\varnothing.
\] %end{equation}
Thus, $(M', D)$ is a {\em Cartan pair} (cf.\ \cite[Definition 5.7.1]{Forstneric2017E}).
Note that $P\subset M'$.

By \cite[Proposition 2.2]{AlarconForstnericLarusson2019X} there are homogeneous coordinates 
$[z_0:z_1:z_2:z_3]$ on $\CP^3$ with $H=\{z_0=0\}$ such that the contact form on 
$\CP^3\setminus H \cong \C^3= \{z_0=1\}$ is given by $dz_1+z_2dz_3-z_3dz_2$, and 
in these coordinates $f$ is of the form
\begin{equation} \label{eq:Fhg}
	f= \Fscr(g,h) = \left[1: e : g : h\right],\quad e=gh - 2\int g dh=2\int hdg-gh,
\end{equation}
where $g,h:M\to\CP^1$ are meromorphic functions on $M$ having at most simple poles at the points in $P$ 
(this reflects the fact that $f$ intersects $H$ transversely at these points and hence is an immersion 
there) and of class $\Cscr^1$ near the boundary of $M$ (in particular, $g$ and $h$ are holomorphic on 
$\mathring M\setminus P$), and $gdh$ is an exact meromorphic $1$-form with a meromorphic 
primitive $\int gdh$ determined up to an additive constant. In fact, every holomorphic Legendrian map
into $\CP^3$ intersecting the hyperplane $H$ transversely is of this form, and such an $f$ is an immersion 
if and only if the map $(g,h):M\setminus P\to\C^2$ is an immersion 
(cf.\ \cite[Corollary 2.3]{AlarconForstnericLarusson2019X}).

The meromorphic $1$-form $gdh$ on $M$ is exact if and only if $\int_C gdh=0$ for every closed curve $C$ in 
$\mathring M\setminus P$. There are two types of curves to consider: those in a homology basis of 
$H_1(M,\Z)\cong \Z^l$, say $C_1,\ldots,C_l$, and small loops around the poles of $gdh$. 
Since $M'$ is a deformation retract of $M$, the curves $C_i$ forming a homology basis of $M$
can be chosen in $\mathring M'\setminus P$ and such that they meet at a single point $p_0\in\mathring M'$ 
and their union $\bigcup_{i=1}^l C_i$ is Runge in $M$ (i.e., holomorphically convex in $M$). 
The integral of $gdh$ along a small Jordan curve around a pole $a\in P$ equals $2\pi \imath \, \Res_a (gdh)$. 
Assuming that $a$ is a simple pole of $g$ or $h$ (as is the case in our situation), 
vanishing of this integral is equivalent to
\begin{equation}\label{eq:Res}
	c_{-1}(h,a)c_1(g,a) - c_{-1}(g,a) c_{1}(h,a)=\Res_a (gdh)=0,
\end{equation}
where $c_k(h,a)$ denotes the coefficient of  $(\zeta-a)^k$ in the Laurent series 
for $h$ at $a$ in a local holomorphic coordinate $\zeta$ 
(so $c_{-1}(h,a)=\Res_a h$); see \cite[Proposition 2.4]{AlarconForstnericLarusson2019X}.   
Clearly these vanishing conditions are preserved if we replace $(g,h)$ by any pair $(g',h')$ 
of meromorphic functions which agrees with $(g,h)$ to the second order at every point $a\in P$.

Let $\Ascr^1(M;P,g)$ denote the space of meromorphic functions on $M$ which are of class 
$\Cscr^1$ up to the boundary, they have poles only at the points of $P$, and they agree with
$g$ to the second order at each point of $P$ (i.e., the difference has a second order
zero). Likewise, $\Ascr^1(M;P,h)$ denotes the corresponding space for $h$.
Consider the {\em period map} 
\[
	\Pcal=(\Pcal_1,\ldots,\Pcal_l)  : \Ascr^1(M;P,g) \times \Ascr^1(M;P,h) \to\C^l
\]
whose $j$-th component equals
\begin{equation}\label{eq:periodmap}
	\Pcal_j(x,y)=\int_{C_j} x\, dy,\qquad x\in \Ascr^1(M;P,g),\ y \in \Ascr^1(M;P,h).
\end{equation}
Note that $\Pcal(x,y)=0$ if and only if the 1-form $xdy$ is exact if and only if
the map $\Fscr(x,y):M\to \CP^3$ \eqref{eq:Fhg} is a holomorphic Legendrian curve. 
Exactness at the points of $P$ is ensured by \eqref{eq:Res} and the 
definition of the spaces $\Ascr^1(M;P,g)$ and $\Ascr^1(M;P,h)$.

The idea of proof is to first use the Riemann-Hilbert deformation method for holomorphic curves
without paying attention to the Legendrian condition. Applying this technique to the central curve 
$f:M\to\CP^3$ and the family of boundary discs $F(u,\cdotp)$ yields a new holomorphic curve 
$\tilde f:M\to\CP^3$ which satisfies the conditions in Theorem \ref{th:RH} but is not necessarily Legendrian. 
In fact, as shown in \cite[proof of Lemma 3.2]{AlarconForstnericLopez2017CM} the deviation from 
the Legendrian condition is not even pointwise small due to the fast turning of the curve 
$\tilde f$ from being close to $f$ on $M\setminus V$ (see condition (i)) to being close to the 
union of the boundary discs $F(u,\cdotp)$ when the point of $M$ approaches the boundary arc $I$
(see conditions (ii) and (iii)). However, what makes the method feasible is that the integral 
of the error is uniformly small, and hence it is possible to correct it and find nearby a Legendrian solution. 
For technical reasons which will become apparent in the proof, we shall apply the 
Riemann-Hilbert deformation method not only to a single data, but to a holomorphically
varying collection (a spray) of data of the same kind which we shall now construct.
By doing things right, the new family of curves will still satisfy the approximation conditions
for small values of the parameter, and the family will contain a Legendrian curve.
We now explain the details of this idea.

Since the map $f=[1:e:g:h]$  \eqref{eq:Fhg} is nonconstant, one of the components $g,h$ is nonconstant. 
Assume that $h$ is nonconstant; the other case can be handled by a symmetric argument.
Then, $h|_{C_j}$ is nonconstant for any $j=1,\ldots,l$ by the identity principle. Since the compact set 
$\bigcup_{j=1}^l C_j$ is Runge in $M$ and every pair of curves $C_i,C_j$ with $i\ne j$ only meet at a point, 
we can find holomorphic functions $g_1,\ldots,g_l$ on $M$ vanishing to the second order at every point of $P$ 
such that for every $j,k=1,\ldots,l$ the number $\int_{C_j} g_k\, dh\approx \delta_{j,k}$ is 
arbitrarily close to $1$ if $j=k$ and to $0$ if $j\ne k$. (Here, $\delta_{j,k}$ is the Kronecker symbol. 
To find such function, we first construct smooth functions $g_k$ on $\bigcup_{j=1}^l C_j$ such that
$\int_{C_j} g_k\, dh =\delta_{j,k}$ and then use Mergelyan's approximation theorem and  
Weierstrass's interpolation theorem to approximate them by holomorphic functions with the stated 
properties on $M$. The elementary details are left to the reader; 
see \cite[Lemma 5.1]{AlarconForstneric2014IM} or \cite[Lemma 3.2]{AlarconForstneric2019JAMS}
for the details in a similar situation when constructing minimal surfaces in $\R^n$.) 
Let $\zeta=(\zeta_1,\ldots,\zeta_l)\in\C^l$. Consider the meromorphic function 
$\hat g : M\times \C^l\to \CP^1$ given by 
\begin{equation}\label{eq:X-spray}
	\hat g(u,\zeta) = g(u) + \sum_{k=1}^l \zeta_k\, g_k(u),\quad u\in M,\ \zeta\in\C^l. 
\end{equation}
Note that $\hat g(\cdotp,\zeta)\in \Ascr^1(M;P,g)$ for every fixed $\zeta$. For all $j,k\in\{1,\ldots,l\}$ we have 
\begin{equation}\label{eq:dizetak}
	\frac{\di}{\di\zeta_k}\bigg|_{\zeta=0} \int_{C_j} \hat g(\cdotp,\zeta)\, dh 
	= \int_{C_j} g_k\, dh \, \approx\, \delta_{j,k}.
\end{equation}
If the above approximations are close enough then 
\[ 
	\frac{\di}{\di\zeta}\bigg|_{\zeta=0} \Pcal(\hat g(\cdotp,\zeta),h)  : \C^l \lra \C^l
	\ \ \text{is an isomorphism}.
\]
(A map \eqref{eq:X-spray} with this property is called a {\em period dominating holomorphic spray} 
with the core $\hat g(\cdotp,0)=g$.)  
By the inverse function theorem there is a ball $r\B\subset \C^l$ around the origin such that the period map 
$r\B\ni \zeta \mapsto \Pcal(\hat g(\cdotp,\zeta),h)\in \C^l$ is biholomorphic onto its image, the latter being a neighbourhood of the origin in $\C^l$.

Fix a point $u_0\in \mathring D$. Consider the function $\tilde e : D\times \C^l\to \C$ given by
\[
	\hat e(u,\zeta) = \hat g(u,\zeta) h(u) - 2\int_{u_0}^u \hat g(\cdotp,\zeta)\, dh + c_0,
	\quad u\in D,\ \zeta\in \C^l,
\]
where the constant $c_0\in\C$ is chosen such that $\hat e(u_0,0)=e(u_0)$,
and hence $\hat e(\cdotp,0)=e|_D$. (The integral is independent of the path in the disc $D$.
It is however impossible to extend $\hat e(\cdotp,\zeta)$ to all of $M$ since the $1$-form 
$\hat g(\cdotp,\zeta)\, dh$ has nonvanishing periods for $\zeta\ne 0$.) 
Let $\hat f : D\times \C^l\to\C^3$ be the family of holomorphic Legendrian discs
\begin{equation}\label{eq:tildef}
	D \ni u \mapsto \hat f(u,\zeta)=\left[1:\hat e(u,\zeta) : \hat g(u,\zeta):h(u)\right] \in \CP^3
\end{equation}
of the form \eqref{eq:Fhg} and depending holomorphically on $\zeta\in\C^l$. Note that $\hat f(u,0)=f(u)$ 
for $u\in D$. Since these discs lie in the affine chart $\CP^3\setminus H$, we delete the initial 
component $1$ and consider them as discs in $\C^3$. Using the same affine coordinates,  
we write the given Legendrian discs $F(u,\cdotp)$ in the theorem as 
\[
	F(u,v)= (Z(u,v),X(u,v),Y(u,v)),\quad u\in bM,\ v\in\cd.
\]
In view of \eqref{eq:Fhg} we have that 
\[
	Z(u,v)=X(u,v) Y(u,v) - 2\int_{0}^v X(u,t)dY(u,t) + e(u)-g(u)h(u).  
\]
For each point $u \in bD\cap bM$ and for every $\zeta\in\C^l$ we let
\[
	\cd  \ni v\mapsto \wh F(u,v,\zeta)= \bigl(\wh Z(u,v,\zeta),\wh X(u,v,\zeta),Y(u,v)\bigr) \in \C^{3}
\]
be the Legendrian disc of class $\Ascr^1(\cd)$ given by
\begin{eqnarray*}
	\wh X(u,v,\zeta) &=& X(u,v)+\hat g(u,\zeta)-g(u), \\
	\wh Z(u,v,\zeta) &=& \wh X(u,v,\zeta)Y(u,v) - 2\int_{t=0}^{t=v} \wh X(u,t,\zeta)\, dY(u,t) + 
	\hat e(u,\zeta) - \hat g(u,\zeta)h(u). 
\end{eqnarray*}
Note that $\wh F(u,v,0)=F(u,v)$ and
\[
	\wh F(u,0,\zeta)=\hat f(u,\zeta),\quad u\in bD\cap bM,\ \zeta\in\C^l.
\]
Finally, for every point $u\in bD\cap bM\setminus I$ and for all $\zeta\in\C^l$ we have that 
\[
	\wh F(u,v,\zeta)=\wh F(u,0,\zeta)=\hat f(u,\zeta),\quad v\in\cd,
\] 
so $\wh F(u,\cdotp,\zeta)$ is the constant disc. We extend $\wh F$ to all points $u\in bD$ by setting 
\[
	\text{$\wh F(u,v,\zeta)=\hat f(u,\zeta)$ for all $u\in bD\setminus I$, $v\in \cd$  and $\zeta\in\C^l$.}
\]
Note that for every fixed $\zeta\in\C^l$ the Legendrian disc $\hat f(\cdotp,\zeta)\colon D\to \C^3$ and 
the family of Legendrian discs $\wh F(u,\cdotp,\zeta) : \cd \to\C^3$ $(u\in bD)$ 
satisfy the assumptions of \cite[Lemma 3.2]{AlarconForstnericLopez2017CM} on $D$ (which is conformally 
diffeomorphic to the standard disc $\cd$), and both families depend holomorphically on $\zeta\in\C^l$. 
Hence, \cite[Lemma 3.2]{AlarconForstnericLopez2017CM} furnishes a family of Legendrian discs 
\[
	\wh G(\cdotp,\zeta) =(\wh G_1,\wh G_2,\wh G_3): D\to\C^3
\]
depending holomorphically on $\zeta$ and satisfying
the estimates in the lemma uniformly with respect to $\zeta \in r\B$. (These estimates are of 
the same type as those in conditions (i)--(iii) of Theorem \ref{th:RH} with $M$ replaced by $D$.
The observation regarding holomorphic dependence and uniformity 
of the estimates with respect to $\zeta$ is evident from 
\cite[proof of Lemma 3.2]{AlarconForstnericLopez2017CM} and has also been used in 
\cite[proof of Theorem 3.3]{AlarconForstnericLopez2017CM}.)

Let $V\subset D\setminus M'$ be a small neighbourhood of the arc $I\subset bM$.
By \cite[Lemma 3.2 (iv)]{AlarconForstnericLopez2017CM} we may assume that 
$\wh G(\cdotp,\zeta)$ is as close as desired to $\hat f(\cdotp,\zeta)$ in the $\Cscr^1$ norm on 
$D\setminus V$, and hence on $M'\cap D\subset D\setminus V$ for all $\zeta\in r\B$. 
In particular, given $\delta >0$ we may assume that
\[
	\|\wh G(\cdotp,\zeta)-\hat f(\cdotp,\zeta)\|_{\Cscr^1(M'\cap D)} <\delta, \quad \zeta\in  r\B.
\]
Recall that the component $\hat g$ of $\hat f$ \eqref{eq:tildef} is a meromorphic function
on $M\times \C^l$ with poles only on $P\times \C^l$.  
By solving a Cousin-I problem with bounds on the Cartan pair $(M',D)$, with interpolation on 
$P$, we can glue $\hat g$ and $\wh G_2$ 
into a function $H_2(\cdotp,\zeta): M\to \CP^1$ of class $\Ascr^1(M;P,g)$, 
holomorphic in $\zeta\in r\B$ and satisfying the estimates
\[
	\|H_2(\cdotp,\zeta)-\hat g(\cdotp,\zeta)\|_{\Cscr^1(M')} < C \delta,\quad\   
	\|H_2(\cdotp,\zeta)-\wh G_2(\cdotp,\zeta)\|_{\Cscr^1(D)} < C \delta,
\]
where the constant $C>0$ only depends on the Cartan pair $(M',D)$. 
By the same token, we can glue the last component $h$ of $\hat f$ 
with the function $\wh G_3(\cdotp,\zeta)$ into a function $H_3(\cdotp,\zeta)$ of class 
$\Ascr^1(M;P,h)$, depending holomorphically on $\zeta\in  r\B$ and satisfying the estimates
\[
	\|H_3(\cdotp,\zeta)- h\|_{\Cscr^1(M')} < C  \delta,\quad  \ 
	\|H_3(\cdotp,\zeta)-\wh G_2(\cdotp,\zeta)\|_{\Cscr^1(D)} < C  \delta.
\]
Since $\bigcup_{i=1}^l C_i\subset M'\setminus P$, 
it follows that the period map $\zeta \mapsto \Pcal(H_2(\cdotp,\zeta), H_3(\cdotp,\zeta))$ 
(see \eqref{eq:periodmap}) approximates the biholomorphic map 
$\zeta\mapsto \Pcal(\hat g(\cdotp,\zeta), h)$ uniformly on $\zeta\in  r\B$.  
Assuming that $\delta>0$ is chosen small enough, there is a point $\zeta'\in  r\B$ as close 
to the origin as desired such that 
\[ 
	\Pcal\bigl(H_2(\cdotp,\zeta'), H_3(\cdotp,\zeta')\bigr)=0.
\] 
Setting $\tilde g=H_2(\cdotp,\zeta')$, $\tilde h=H_3(\cdotp,\zeta')$ we obtain a holomorphic 
Legendrian curve 
\[
	\tilde f =[1:\tilde e:\tilde g:\tilde h] : M\to \CP^3
\]
of the form \eqref{eq:Fhg} with $\tilde e(u_0)=e(u_0)$. It follows from the construction that 
$\tilde f$ satisfies conditions (i)--(iii) of Theorem \ref{th:RH} provided 
that the approximations were close enough.

In order to ensure also the interpolation condition (iv) at finitely many points 
$A=\{a_1,\ldots, a_k\}\subset \mathring M$, we amend the above procedure as follows. 
First, we choose the hyperplane $H$ at the beginning of the proof such that, in addition to the other 
stated conditions, it does not intersect the finite set $f(A)$, and we choose the disc $D$ as above and 
contained in $M\setminus A$. Pick a base point $u_0\in \mathring D$. We connect $u_0$ 
to each point $a_j\in A$ by an embedded oriented arc $E_j\subset \mathring M\setminus P$ 
which exits $D$ only once and such that any two of these arcs only meet at $u_0$.
It follows that the inclusion $M\setminus (D\cup \bigcup_{i=1}^k E_i) \subset M$ is a homotopy 
equivalence, and hence we can choose curves $C_1,\ldots,C_l$ forming a homology basis of $M$ 
contained in the complement of $D\cup P\cup \bigcup_{i=1}^k E_i$.
To the period map $\Pcal$ \eqref{eq:periodmap} we add $k$ additional
components given by the integrals over the arcs $E_1,\ldots,E_k$.
The rest of the proof remains unchanged. By ensuring that 
the integrals of the $1$-form $\tilde g d\tilde h$ over the arcs $E_1,\ldots,E_k$ equal those of $gdh$, 
the map $\tilde f$ agrees with $f$ at the points of $A$. By the same tools we can 
obtain finite order interpolation on $A$.
\end{proof}

%%%%%%%%%%%%%%%%%%%%%%%%%%%%%%%%%%%%
%
%   A GENERALPOSITION THEOREM FOR LEGENDRIAN CURVES IN CP3
%
%%%%%%%%%%%%%%%%%%%%%%%%%%%%%%%%%%%%

\section{A general position theorem for Legendrian curves in projective spaces}\label{sec:GP}

Holomorphic Legendrian curves obtained by Riemann-Hilbert modifications  in the previous
section typically have branch points. However, in the application of this method to the proof
of Theorem \ref{th:main} we need Legendrian immersions.

The purpose of this section is to explain the following general position theorem for holomorphic 
Legendrian curves in projective spaces. As was already pointed out in the previous section,
$\CP^{2n+1}$ admits a unique complex contact structure up to holomorphic contactomorphisms 
a hence a concrete choice of the contact bundle is irrelevant. 

%
%   APPROXIMATION BY EMBEDDED LEGENDRIAN CURVES
%
\begin{theorem}\label{th:emb}
\begin{enumerate}[\rm (a)]
\item 
Let $M$ be a compact bordered Riemann surface. Every Legendrian curve $f:M\to\CP^{2n+1}$ of class 
$\Ascr^1(M)$ can be approximated in the $\Cscr^1(M,\CP^{2n+1})$ topology 
by holomorphic Legendrian embeddings $\tilde f: M\hra \CP^{2n+1}$.
\item
Every holomorphic Legendrian curve $f:M\to\CP^{2n+1}$ from an open Riemann surface can be approximated
uniformly on compacts  in $M$ by holomorphic Legendrian embeddings $\tilde f:M\hra \CP^{2n+1}$.
\end{enumerate}
\end{theorem}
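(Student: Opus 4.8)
The plan is to adapt the general position technique for holomorphic Legendrian curves in $\C^{2n+1}$ from \cite{AlarconForstnericLopez2017CM}, combining a period-dominating spray of Legendrian curves with a parametric transversality argument, and to reduce the projective setting to an affine chart via a generic hyperplane exactly as in the proof of Theorem~\ref{th:RH}. Throughout we assume $n\ge 1$ (for $n=0$ there is no contact structure). I treat (a) in detail; (b) follows by exhaustion. After a preliminary Legendrian Mergelyan approximation we may assume that $f$ is holomorphic on a neighbourhood of $M$, and by Bertini's theorem we may deform it slightly, keeping it Legendrian, so that some projective hyperplane $H\subset\CP^{2n+1}$ meets $f(M)$ transversely and avoids $f(bM)$; then $P=f^{-1}(H)\subset\mathring M$ is finite. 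In an affine chart $\CP^{2n+1}\setminus H\cong\C^{2n+1}$ adapted to the contact form (cf.\ \cite[Proposition~2.2]{AlarconForstnericLarusson2019X}), $f$ is given by a Weierstrass-type representation $f=\Fscr(g_1,h_1,\dots,g_n,h_n)$ in which the $g_j,h_j$ are meromorphic on $M$ with at most simple poles in $P$, the residues at $P$ satisfy vanishing conditions analogous to \eqref{eq:Res}, the $1$-form $\sum_j g_j\,dh_j$ is exact (its periods over a homology basis $C_1,\dots,C_l$ of $M$ chosen in $\mathring M\setminus P$ vanish), and $f$ is an immersion precisely where $(g_1,\dots,h_n):M\setminus P\to\C^{2n}$ is an immersion, transversality at $P$ handling the poles.

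Next I would build a large holomorphic spray of Legendrian curves with core $f$. Adding to the Weierstrass data multiples $\sum_k\zeta_k\phi_k$ of holomorphic functions $\phi_k$ on $M$ vanishing to second order on $P$, one obtains a family $\hat f(\cdot,\zeta)=\Fscr(\hat g_1,\dots,\hat h_n)$, $\zeta\in\C^N$, with $\hat f(\cdot,0)=f$, which can be arranged to be simultaneously \emph{period dominating}, so $\zeta\mapsto\Pcal(\hat f(\cdot,\zeta))$ is a submersion at $0$; \emph{first-jet dominating} on a compact set $M'\subset M\setminus P$ filling $M$ up to small discs around $P$, meaning $\zeta\mapsto$ (values and first derivatives of the $\hat g_j,\hat h_j$ at $p$) is a submersion at $0$ for every $p\in M'$; and \emph{two-point dominating}, meaning $\zeta\mapsto(\hat f(p,\zeta),\hat f(q,\zeta))$ is a submersion at $0$ for all $p\ne q$ in $M'$. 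Such $\phi_k$ exist because $\Oscr(M)$ separates $1$-jets at each point and separates pairs of points, and because the period functionals are integrals $\int_{C_j}(\cdot)\,dh_j$, as in the spray constructions of \cite{AlarconForstnericLopez2017CM,AlarconForstneric2019JAMS}. Solving $\Pcal=0$ near $0$ by period domination produces a complex submanifold $\Sigma\ni 0$ of a ball in $\C^N$ over which $\hat f(\cdot,\zeta)$ is a genuine holomorphic Legendrian curve on $M$, of class $\Cscr^1(M)$ and close to $f$; the other two domination properties persist over $\Sigma$.

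Now the transversality step. The locus of $1$-jets whose derivative vanishes has complex codimension $2n+1\ge 3>1=\dim_\C M$; by first-jet domination the $1$-jet evaluation over $\Sigma$ is transverse to it, so the parametric transversality theorem yields an open dense set of $\zeta\in\Sigma$ near $0$ for which $\hat f(\cdot,\zeta)$ is an immersion. Restricting to a relatively compact such subset and using that immersions of the compact surface $M$ are locally injective, a Lebesgue-number argument provides one neighbourhood $U$ of the diagonal in $M\times M$ such that $\hat f(p,\zeta)\ne\hat f(q,\zeta)$ whenever $(p,q)\in U$, $p\ne q$, for all these $\zeta$. On the compact set $(M\times M)\setminus U$ the double-point locus has expected dimension $2-(2n+1)<0$, and by two-point domination $(p,q,\zeta)\mapsto(\hat f(p,\zeta),\hat f(q,\zeta))$ is transverse to the diagonal of $\CP^{2n+1}\times\CP^{2n+1}$ over $\Sigma$; hence for generic $\zeta$ near $0$ the map $\hat f(\cdot,\zeta)$ is a double-point-free immersion, i.e.\ an embedding, proving (a). For (b), exhaust the open Riemann surface by compact bordered domains $M_1\Subset M_2\Subset\cdots$ with $\bigcup_jM_j=M$, each Runge in $M$, and iterate (a) together with a Legendrian Runge approximation (e.g.\ \cite[Corollary~7.3]{AlarconForstnericLarusson2019X} or Theorem~\ref{th:RH}), making the corrections small enough on the previous domain to keep injectivity there and to force local uniform convergence to a holomorphic Legendrian embedding $\tilde f:M\hra\CP^{2n+1}$.

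I expect the main obstacle to be the spray construction: one must achieve period, first-jet, and two-point domination simultaneously within the affine Weierstrass data while prescribing the simple poles and residues at $P$ (second-order interpolation there), and keep this compatible with the fact that $f(M)$ need not lie in any single affine chart — so the chart, and hence $H$, must be chosen carefully, with extra care at the poles where the representation degenerates. A secondary point is the uniformity of transversality near the diagonal of $M\times M$, which is why one first secures the immersion property and then uses local injectivity to remove a neighbourhood of the diagonal before addressing double points.
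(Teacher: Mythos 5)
Your overall strategy is sound and rests on the same toolbox as the paper (generic hyperplane, affine Weierstrass representation with simple poles at $P=f^{-1}(H)$ and residue conditions \eqref{eq:Res}, period-dominating sprays, transversality), but it is organised quite differently. The paper factors the proof into two independent steps: (i) a \emph{desingularization} step, reducing a branched $\Ascr^1$ Legendrian map to a Legendrian immersion of a neighbourhood of $M$ by approximating the pair $(g,h)$ through the meromorphic analogues of Lemmas 4.3 and 4.4 of \cite{AlarconForstnericLopez2017CM} (this is where the new work lies); and (ii) the passage from immersions to embeddings, which is simply \emph{cited} from \cite[Corollary 3.7]{AlarconForstnericLarusson2019X}, itself a combination of the Darboux-chart general position theorem \cite[Theorem 1.2]{AlarconForstneric2019IMRN} with the projective approximation theorem. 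You instead build one large spray that is simultaneously period, first-jet, and two-point dominating and run a single parametric transversality argument; in effect you reprove the cited general position result rather than invoking it. This is legitimate and more self-contained, at the cost of having to carry the two-point domination through the nonlocal component $e=gh-2\int g\,dh$ and through the poles at $P$ (double points involving one point of $P$ and one of $M\setminus P$ are excluded automatically since $f(M\setminus P)\cap H=\varnothing$, but pairs within $P$ and the chart-dependence of the ``difference map'' near a double point still need care). Two small corrections: the codimension of the branch locus is $2n$ (the vanishing locus of $(g_1',\dots,h_n')$ in $\C^{2n}$), not $2n+1$, which still exceeds $\dim_\C M=1$ for $n\ge1$; and part (b) in the paper does not require your iterative exhaustion, since the cited Corollary 3.7 already treats open Riemann surfaces directly once the map has been desingularized on each compact piece.

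One step of your argument is genuinely flawed as written: the opening reduction ``after a preliminary Legendrian Mergelyan approximation we may assume $f$ is holomorphic on a neighbourhood of $M$.'' The Legendrian Mergelyan theorem \cite[Theorem 1.2]{Forstneric2020Merg} applies only to Legendrian \emph{immersions} of class $\Ascr^2$, and the paper explicitly notes at the end of Section 5 that it does not apply to branched Legendrian maps — which are exactly the maps the theorem must handle. Invoking it here is circular. The fix is the one the paper uses: work directly with the $\Ascr^1$ Weierstrass data $(g,h)$, which are meromorphic on $\mathring M$ and of class $\Cscr^1$ up to $bM$, and apply ordinary scalar Mergelyan approximation (with second-order interpolation at $P$ to preserve the residue conditions and period vanishing) to push them to a neighbourhood of $M$. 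With that repair, your argument goes through.
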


The analogue of this result for Legendrian curves in $\C^{2n+1}$ with its standard complex contact structure
was proved in \cite[Theorem 5.1]{AlarconForstnericLopez2017CM} where it was shown in addition 
that the approximating embedding $\tilde f:M\hra \C^{2n+1}$ in case (b) can be chosen proper.
(The latter condition is of course impossible in the compact manifold $\CP^{2n+1}$.) 
The cited result also gives approximation of generalised Legendrian curves $S\to \C^{2n+1}$  
on compact Runge admissible sets $S$ in an open Riemann surface $M$; this can be 
extended to Legendrian curves in $\CP^{2n+1}$ as well, but we shall not need it in the present paper.

\begin{proof}
It was shown in \cite[Corollary 3.7]{AlarconForstnericLarusson2019X} that every holomorphic 
Legendrian immersion $M\to\CP^{2n+1}$ from an open Riemann surface $M$ 
can be approximated uniformly on compacts by holomorphic Legendrian embeddings $M\hra\CP^{2n+1}$. 
The proof combines \cite[Theorem 1.2]{AlarconForstneric2019IMRN} 
to the effect that every holomorphic Legendrian immersion $M\to X$ from an open Riemann surface
to an arbitrary complex contact manifold can be approximated, uniformly on any compact 
subset $K$ of $M$, by holomorphic Legendrian embeddings $U\hra X$ from open neighbourhoods
$U\subset M$ of $K$, and the approximation theorem for holomorphic Legendrian immersions
into projective spaces given by \cite[Theorem 3.4]{AlarconForstnericLarusson2019X}.

To prove the theorem, it remains to show that 
one can approximate any Legendrian map $f:M\to \CP^{2n+1}$ of class $\Ascr^1(M)$ 
by holomorphic Legendrian immersions $U\to \CP^{2n+1}$ from open neighbourhoods $U$ of $M$ 
in an ambient Riemann surface. For the convenience of notation we consider curves in $\CP^3$,
although this restriction is inessential. As in the proof of Theorem \ref{th:RH} we find a projective hyperplane 
$H\subset \CP^3$ intersecting $f(M)$ transversely in at most finitely many points $P\subset \mathring M$ 
and not intersecting  $f(bM)$, and homogeneous coordinates $[z_0:z_1:z_2:z_3]$ 
with $H=\{z_0=0\}$ in which 
$
	f= \Fscr(g,h) = \left[1: gh - 2\int g dh : g : h\right]
$
(cf.\ \eqref{eq:Fhg}), where $g,h:M\to\CP^1$ are meromorphic functions having only simple poles 
at the points in $P$ and of class $\Cscr^1$ near the boundary of $M$.
A map $f$ of this form is an immersion if and only if  $(g,h):M\setminus P\to\C^2$ is an immersion 
(cf.\ \cite[Corollary 2.3]{AlarconForstnericLarusson2019X}). It now suffices to approximate the map 
$(g,h):M\to (\CP^1)^2$ as closely as desired in $\Cscr^1(M,(\CP^1)^2)$ by a meromorphic map 
$(\tilde g,\tilde h):U\to (\CP^1)^2$ defined on a neighbourhood $U\subset R$ of $M$ such that 
$(\tilde g,\tilde h)$ agrees with $(g,h)$ to the second order at every point of $P$,
it is a holomorphic immersion $U\setminus P\to \C^2$, and the meromorphic $1$-form 
$\tilde g d\tilde h$ is exact. We have seen in the proof of Theorem \ref{th:RH}
that the interpolation condition on $P$ ensures that $\tilde g d\tilde h$ 
has a local meromorphic primitive at every point of $P$; see \eqref{eq:Res}.
Therefore, exactness of $\tilde g d\tilde h$ is equivalent to the period 
vanishing conditions $\int_{C_i} \tilde g d\tilde h=0$ $(i=1,\ldots,l)$ where 
$C_1,\ldots, C_l\subset \mathring M\setminus P$ is a basis of the homology group $H_1(M,\Z)=\Z^l$. 

The construction of $(\tilde g,\tilde h)$ satisfying these conditions is made in two steps.
In the first step we approximate $(g,h)$ by a meromorphic map $(\hat g,\hat h)$ defined on a 
neighbourhood $U\subset R$ of $M$ 
which agrees with $(g,h)$ to the second order at the points of $P$, it has no poles
on $M\setminus P$, and such that $\hat g d\hat h$ is exact. This is achieved by following the proof of 
\cite[Lemma 4.3]{AlarconForstnericLopez2017CM}, the only addition being the presence
of poles at the points in $P$ and the interpolation condition at these points. 
Next, we follow the first part of the proof of 
\cite[Lemma 4.4]{AlarconForstnericLopez2017CM} in order to approximate 
$(\hat g,\hat h)$ on $M$ and interpolate it to the second order on $P$ by a 
meromorphic map $(\tilde g,\tilde h)$ on a neighbourhood of $M$ which is a holomorphic 
immersion of $M\setminus P$ into $\C^2$ and such that $\tilde gd\tilde h$ is an exact
meromorphic $1$-form. By what has been said,
the associated map $\tilde f=\Fscr(\tilde g,\tilde h):M\to\CP^3$ 
defined by \eqref{eq:Fhg} is then a holomorphic immersion. Both proofs alluded to above
easily extend to the present setting in essentially the same way as was done in the proof
of Theorem \ref{th:RH}, and we leave the details to the reader.
\end{proof}

\begin{problem}
Does part (a) of Theorem \ref{th:emb} hold for Legendrian curves in an arbitrary complex contact manifold $(X,\xi)$?
\end{problem}

Assuming that $f:M\to X$ is a Legendrian {\em immersion} of class $\Ascr^2(M,X)$, 
it was shown in \cite[Theorem 1.2]{Forstneric2020Merg} that $f$ can be
approximated in $\Cscr^2(M,X)$ by holomorphic Legendrian embeddings of small open neighbourhoods 
of $M$ into $X$; however, the cited result does not apply to branched Legendrian maps.

%%%%%%%%%%%%%%%%%%%%%%%%%%%%%%%%%%%%
%
%   PROOF
%
%%%%%%%%%%%%%%%%%%%%%%%%%%%%%%%%%%%%

\section{Proof of Theorem \ref{th:main}}\label{sec:proof}

Let $\Omega\subset \CP^3$ be the domain \eqref{eq:Omega2} and $\pi:\Omega\to \B\subset \R^4$
be the twistor bundle over the hyperbolic ball $(\B,g_h)$ given by \eqref{eq:pi}.
Denote by $\xi\subset T\CP^3$ the holomorphic contact bundle determined by the 
homogeneous $1$-form $\beta$ \eqref{eq:beta}, so $\xi|_{T\Omega}$ is the horizontal bundle of the twistor
projection $\pi:\Omega\to \B$. When speaking of Legendrian curves in $\Omega$, we always mean
holomorphic curves tangent to $\xi$. By what has been said in Sect.\ \ref{sec:H4}, 
Theorem \ref{th:main} follows immediately from the following result.

%
%    PROPER LEGENDRIAN EMBEDDINGS IN OMEGA
%
\begin{theorem}\label{th:Omega}
Let $M$ be a bordered Riemann surface and $F:\overline M\to \Omega$ be a Legendrian curve of class 
$\Cscr^1(\overline M,\Omega)$ which is holomorphic on $M$. Then, $F$ can be approximated uniformly 
on compacts in $M$ by proper holomorphic Legendrian embeddings $\wt F:M\hra \Omega$ which
can be chosen to agree with $F$ to a given finite order at  finitely many points in $M$.
\end{theorem}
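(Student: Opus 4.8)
The plan is to build $\wt F$ by an exhaustion argument, pushing the boundary of the Legendrian curve closer and closer to $b\Omega=\Gamma$ (equivalently, pushing the projected superminimal surface out towards $b\B=S^3$) while controlling the conformal type, and then cleaning up at the end. First I would fix a normal exhaustion $M_1\Subset M_2\Subset\cdots$ of $M$ by smoothly bounded compact bordered domains, all containing the finitely many interpolation points and carrying the homology of $M$ at some stage. The target is to construct a sequence of Legendrian immersions $F_j:\overline{M_j}\to\Omega$ of class $\Ascr^1$, with $F_1$ close to $F|_{\overline{M_1}}$, such that $F_{j+1}$ approximates $F_j$ as closely as we like on $M_{j-1}$ (so the sequence converges uniformly on compacts to a holomorphic Legendrian map $\wt F:M\to\Omega$), while the \emph{boundary} $F_j(bM_j)$ is forced into the shell $\{x\in\Omega: \pi(x)\in\B,\ |\pi(x)|>1-1/j\}$ — this last condition is exactly what makes the limit map proper into $\Omega$. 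The interpolation at the finite set is preserved throughout by the interpolation clauses in Theorems \ref{th:RH} and \ref{th:emb}, and the embedding property is recovered at the very end via Theorem \ref{th:emb}(b).

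The heart of the induction is one step: given $F_j$ on $\overline{M_j}$ whose boundary already has $|\pi\circ F_j|>1-1/j$, produce $F_{j+1}$ on $\overline{M_{j+1}}$ approximating $F_j$ on $M_{j-1}$ with $|\pi\circ F_{j+1}|>1-1/(j+1)$ on $bM_{j+1}$. I would do this by finitely many successive Riemann--Hilbert modifications of the type in Theorem \ref{th:RH}, each one applied along a short boundary arc $I\subset bM_{j+1}$: for a point $u$ on such an arc we attach a Legendrian \emph{boundary disc} $v\mapsto F(u,v)$ whose far end is pushed out towards $\Gamma$. Here the geometry of Sections \ref{sec:H4intro}--\ref{sec:H4} does the real work: by Proposition \ref{prop:Lz} every oriented hyperbolic $2$-plane $\Lambda(b,V)\subset\B$ lifts to (the $\Omega$-part of) a linear $\xi$-Legendrian rational curve $\CP^1\subset\CP^3$, and these $\CP^1$'s, being totally geodesic and reaching all the way to $b\B$, provide boundary discs of large extrinsic diameter that move $\pi\circ F$ radially outward; the hyperplane $H$ required in Theorem \ref{th:RH} can be taken to avoid the (compact) union of these discs since each such disc misses generic hyperplanes. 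After the sweep of arcs covering $bM_{j+1}$ we get a Legendrian curve with boundary in the desired shell, close to $F_j$ on the interior, and then one application of Theorem \ref{th:emb}(a) makes it an embedding without disturbing the boundary estimate or the interpolation.

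The main obstacle I expect is the quantitative geometry of the push-out: one must verify that Riemann--Hilbert modification with these large hyperbolic-plane discs genuinely increases $|\pi\circ F|$ on the boundary by a definite amount that does not degrade as $|\pi\circ F|\to 1$, i.e. that the twistor projection of the attached $\CP^1$ meets $S^3=b\B$ transversally enough and in a controlled direction, so that the shell condition can be achieved in finitely many steps per stage while the interior approximation on $M_{j-1}$ is not spoiled (the modifications near $bM_{j+1}$ must be supported away from $M_{j-1}$, which is fine, but one also has to control the accumulated effect of many modifications). Concretely one tracks the function $\rho=|\pi\circ F|^2 = |q_2|^2/|q_1|^2$ from \eqref{eq:pi} and checks that on each boundary disc $F(u,\cdot)$ one can drive $\rho$ arbitrarily close to $1$ while keeping $F(u,0)=F(u)$; this is where the explicit action of the isometry group $G_h$ \eqref{eq:Gh} on the linear Legendrian $\CP^1$'s — moving the totally geodesic $\Lambda(b,V)$ around — is used to position the discs correctly relative to the current boundary point. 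The periods and the interpolation are handled exactly as in the proof of Theorem \ref{th:RH} (period-dominating sprays, Cousin-I with bounds on a Cartan pair), so those are routine; the genuinely new content is the hyperbolic-geometric mechanism for the properness.
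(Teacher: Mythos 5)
Your overall strategy is the paper's: an inductive scheme of Riemann--Hilbert modifications (Theorem \ref{th:RH}) whose boundary discs are the Legendrian lines $L_z\subset\Omega$ lifting the hyperbolic $2$-planes of Propositions \ref{prop:Lambda} and \ref{prop:Lz}, followed by the general position theorem (Theorem \ref{th:emb}) to recover embeddings, with jet interpolation carried along. The two worries you single out are resolved exactly as you would hope: the function $\rho$ of \eqref{eq:rho} restricted to $L_z$ attains its minimum precisely at $z$ (this is \eqref{eq:Lz2}, coming from $|b|=\min\{|x|:x\in \Lambda(b,V)\}$ in Proposition \ref{prop:Lambda}), so a boundary point once pushed past a level set of $\rho$ never re-enters it (property \eqref{eq:avoiding}), and one does not need a uniform lower bound on the gain per modification: the paper fixes finitely many intermediate levels $c=c_0<\cdots<c_m=c'$ and covers each $b\Omega_{c_i}$ by finitely many small patches $\omega_{i,j}$, reaching the next level in finitely many steps (Lemma \ref{lem:bigstep}).

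The one step that fails as you justify it is the hyperplane hypothesis of Theorem \ref{th:RH}. A line in $\CP^3$ meets \emph{every} hyperplane, so it is not true that ``each such disc misses generic hyperplanes''; for a single $L_z$ the hyperplanes whose intersection point with the corresponding line falls outside $\overline\Omega$ form a nonempty open but not dense set, and for the union of the full discs $L_{F(u)}$ over an arc $u\in I$ --- a three-real-dimensional set --- a generic hyperplane \emph{does} meet the union, and there may be no hyperplane avoiding it at all. This is why the paper truncates the boundary discs at the next level, taking $\wh L_u=L_{F(u)}\cap\overline\Omega_{c'_1}$ with $c'_1$ only slightly above the current level, and chooses the subdivision and the patches fine enough that the compact union of the relevant truncated discs lies in a single affine chart (condition \eqref{eq:affine}); only then does Theorem \ref{th:RH} apply. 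With that correction, and with the bookkeeping difference that the paper keeps all maps defined on the fixed domain $\overline M$ rather than on an exhaustion $M_j\nearrow M$, your argument coincides with the paper's proof.
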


Indeed, by the Bryant correspondence the given superminimal immersion $f:\overline M\to \B$ 
in Theorem \ref{th:main} (which may be assumed of positive spin) lifts to a unique 
Legendrian immersion $F:\overline M\to\Omega$ as in Theorem \ref{th:Omega}, and if
$\wt F:M\hra \Omega$ is a resulting proper holomorphic Legendrian embedding in Theorem \ref{th:Omega} 
then its projection $\tilde f=\pi\circ \wt F:M\to \B$ is a proper superminimal immersion satisfying
Theorem \ref{th:main}.

We begin with some preparations. Consider the exhaustion function $\rho:\Omega\to [0,1)$ defined 
in the homogeneous coordinates $z=[z_1:z_2:z_3:z_4]$ by 
\begin{equation}\label{eq:rho}
	\rho([z_1:z_2:z_3:z_4])=|\pi(z)|^2 = \frac{|z_3|^2+|z_4|^2}{|z_1|^2+|z_2|^2}
\end{equation}
(see \eqref{eq:pi}).  Given a pair of numbers $0<c<c'\le 1$ we write
\begin{equation}\label{eq:Omegac}
	 \Omega_{c}=\{z\in\Omega: \rho(z)<c\}, \ \quad 
	 \Omega_{c,c'}=\{z\in\Omega: c<\rho(z)<c'\}.
\end{equation}

For every point $z\in \Omega\setminus \pi^{-1}(0)$ there is a unique properly
embedded Legendrian disc $L_z\subset\Omega$ with $z\in L_z$ whose projection 
$\pi(L_z)\subset \B$ is a hyperbolic surface $\Lambda(\pi(z),V)$ in Proposition \ref{prop:Lambda}.
Indeed, by the twistor correspondence the point $z$ represents an almost hermitian
structure on the tangent space $T_x\R^4$ at the base point $x=\pi(z)\in\B\setminus \{0\}$. 
Let $S_x\subset\B$ denote the three-sphere with centre $0$ and passing through $x$. Then, 
\begin{equation}\label{eq:Lz}
	\pi(L_z) = \Lambda(x,V)\ \ \text{where}\ \ V=T_x S_x \cap z(T_x S_x).
\end{equation}
That is, $V$ is the unique $z$-invariant two-plane contained in the three dimensional
tangent space $T_x S_x$ to the sphere $S_x$ at $x$. (Such $L_z$ also exists for every point $z$ 
in the central fibre $\pi^{-1}(0)$, but it is not unique since different $2$-planes $V\subset T_0\R^4$ 
may determine the same almost hermitian structure $z$ on $T_0\R^4$.) 
By Proposition \ref{prop:Lz}, $L_z$ is the intersection of $\Omega$ with a linearly embedded 
Legendrian rational curve $\CP^1\subset \CP^3$. By Proposition \ref{prop:Lambda} we have 
\begin{equation}\label{eq:Lz2}
	L_z \subset \{z\} \cup \Omega_{c,1}\ \ \text{where}\ \ c=|\rho(z)|\in (0,1), 
\end{equation}
where we are using the notation \eqref{eq:Omegac}. 
It is obvious that the family of Legendrian holomorphic discs $L_z$ depend real-analytically on 
the point $z\in \Omega\setminus \pi^{-1}(0)$.

Theorem \ref{th:Omega} is obtained from the following lemma by a standard inductive procedure.

%
%
%   LEMMA BIGSTEP: Lemma 6.3 from \cite{DrinovecForstneric2007DMJ}
%
%
\begin{lemma}\label{lem:bigstep}
Let $M$ be a bordered Riemann surface, $P$ be a finite set of points in
$M$, and $0<c<c'<c''<1$. Assume that $F:\overline M\to \Omega$ is a 
Legendrian map of class $\Ascr^1(\overline M,\Omega)$ and $U\Subset M$ is an open 
subset such that $F(M\setminus U)\subset\Omega_{c,c'}$. Given $\epsilon>0$ there exists a 
holomorphic Legendrian embedding $G: \overline M \to\Omega$ satisfying the following conditions:
\begin{enumerate}[\rm (i)]
\item  $G(bM)\subset  \Omega_{c',c''}$,
\item  $G(M\setminus U)\subset \Omega_{c,c''}$,
\item  $\dist(G(u),F(u))<\epsilon$ for $u\in \overline U$, and
\item  $F$ and $G$ have the same $k$-jets at each of the points in $P$ for a given $k\in\N$. 
\end{enumerate}
\end{lemma}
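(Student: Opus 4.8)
The plan is to push the boundary $F(bM)$ from the shell $\Omega_{c,c'}$ outward into $\Omega_{c',c''}$ by a finite sequence of Riemann--Hilbert modifications of the type furnished by Theorem \ref{th:RH}, using the Legendrian discs $L_z$ of \eqref{eq:Lz}--\eqref{eq:Lz2} as the attached boundary discs. The key geometric input is \eqref{eq:Lz2}: once the boundary point $F(u)$ lies at level $\rho(F(u))=c_0\in(0,1)$, the disc $L_{F(u)}$ stays in $\{F(u)\}\cup\Omega_{c_0,1}$, i.e.\ it moves \emph{strictly outward} in $\rho$ except at the attaching point. So attaching a small cap of $L_{F(u)}$ along a short boundary arc raises $\rho$ along that arc while keeping the curve inside $\Omega$ and barely perturbing it elsewhere. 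Since $bM$ is compact and $F(bM)$ lies in the relatively compact shell $\Omega_{c,c'}$, finitely many such arc-modifications suffice to raise $\rho$ above $c'$ on all of $bM$ while staying below $c''$.

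In more detail, first I would cover $bM$ by finitely many short open arcs $I_1,\dots,I_N$, none of them a whole boundary component, whose closures exhaust $bM$ and such that $F$ is nearly constant on each $I_j$ (valued near some $z_j\in\Omega_{c,c'}$). Working one arc at a time: on $I_j$ define the family of Legendrian discs $v\mapsto F_j(u,v)$ by taking $F_j(u,\cdot)$ to be a suitably reparametrized small sub-disc of the linear Legendrian rational curve $\CP^1\supset L_{F(u)}$ through $F(u)$, scaled so that its far boundary lies in $\Omega_{c',c''}$ (possible by \eqref{eq:Lz2} since $\pi(L_{F(u)})$ is a hyperbolic $2$-plane in $\B$ reaching out to $b\B=S^3$), and extend $F_j(u,v)=F(u)$ for $u$ outside $I_j$; these depend continuously on $u$ and are Legendrian by Proposition \ref{prop:Lz}. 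Because the compact set $\bigcup_{u\in I_j}F_j(u,\cd)$ is contained in the relatively compact $\Omega_{c,c''}\Subset\CP^3$, it misses some projective hyperplane $H_j$, so Theorem \ref{th:RH} applies: it yields a holomorphic Legendrian map $F^{(j)}:\overline M\to\CP^3$ that (a) $\epsilon_j$-approximates $F^{(j-1)}$ off a thin neighbourhood $V_j$ of $I_j$, (b) has $\dist(F^{(j)}(u),F_j(u,b\D))<\epsilon_j$ on $bM$, (c) has $\dist(F^{(j)}(u),F_j(\tau(u),\cd))<\epsilon_j$ on $V_j$, and (d) matches the prescribed $k$-jets at $P$. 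Conditions (b)--(c) together with \eqref{eq:Lz2} force $\rho\circ F^{(j)}$ up onto $[c',c'']$ over $\overline{I_j}$ and keep it in $(c,c'')$ over $V_j$, for $\epsilon_j$ small; condition (a) preserves the progress already made over $\overline{I_1}\cup\cdots\cup\overline{I_{j-1}}$ and keeps the curve in $\Omega_{c,c''}$ over $M\setminus U$. Iterating over $j=1,\dots,N$ with a rapidly shrinking sequence $\epsilon_j$ produces a holomorphic Legendrian map $G_0:=F^{(N)}$ satisfying (i), (ii), (iv) of the lemma and (iii) with $\epsilon$ replaced by $\sum_j\epsilon_j<\epsilon/2$. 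Finally apply the general position Theorem \ref{th:emb}(a) to perturb $G_0$ in $\Cscr^1(\overline M)$ by less than the remaining slack into a holomorphic Legendrian \emph{embedding} $G$, keeping the jet-interpolation at $P$ and the open conditions (i)--(iii), which remain open under small $\Cscr^1$ perturbations.

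The main obstacle is bookkeeping of the three $\rho$-levels simultaneously through the induction: each Riemann--Hilbert step guarantees closeness to the boundary discs only up to an error $\epsilon_j$ that must be chosen \emph{after} the previous steps (so that (a) protects earlier arcs) yet \emph{before} knowing the retraction neighbourhood $V_j$, and one must verify that the estimate $\dist(G_0(u),F_j(\tau(u),\cd))<\epsilon_j$ on $V_j$, combined with the inclusion $L_{F(u)}\subset\{F(u)\}\cup\Omega_{c,1}$, really does keep $\rho\circ G_0$ in the open interval $(c,c'')$ there rather than dropping below $c$ near the attaching arc; this is where the precise geometry of Proposition \ref{prop:Lambda} and \eqref{eq:Lz2} (the discs are linear $\CP^1$'s whose $\rho$-range is controlled by the Euclidean position of the base point) is essential. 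A secondary point is checking that the attached caps $F_j(u,\cdot)$ can be taken of class $\Ascr^1(\cd)$ depending continuously on $u$ across the endpoints of $I_j$, which follows from the real-analytic dependence of $L_z$ on $z\in\Omega\setminus\pi^{-1}(0)$ together with a cutoff in the disc variable $v$ near $v=0$.
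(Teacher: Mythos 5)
Your overall strategy---iterated Riemann--Hilbert modifications with the linear Legendrian discs $L_z$, using \eqref{eq:Lz2} to guarantee that $\rho$ can only increase along each attached disc, followed by the general position theorem---is the right one, and your handling of conditions (ii)--(iv) is essentially correct. The genuine gap is in the hyperplane hypothesis of Theorem \ref{th:RH}, which is exactly the point the actual proof is organised around. Your justification that $\bigcup_{u\in I_j}F_j(u,\cd)$ omits a hyperplane ``because it is contained in the relatively compact $\Omega_{c,c''}$'' is false: every hyperplane $H\subset\CP^3$ meets both lines $\{z_3=z_4=0\}$ and $\{z_1=z_2=0\}$, so the quotient $(|z_3|^2+|z_4|^2)/(|z_1|^2+|z_2|^2)$ restricted to the connected set $H$ takes every value in $[0,\infty]$, and $H$ meets the shell $\Omega_{c,c''}$. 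What actually makes the hypothesis hold is smallness of the configuration: if all discs are based at points near a single $z_0$ and are truncated at a level close to $\rho(z_0)$, they stay near the single closed disc $L_{z_0}\cap \overline\Omega_{c_1}$, a proper closed subset of the line $\CP^1\supset L_{z_0}$, which does omit a hyperplane. Your first step can be arranged this way (short arcs, $F$ nearly constant), but the scheme breaks at the second step: the arcs must overlap (otherwise the attached discs degenerate at shared endpoints and the level is never raised there), and on an arc $I_j$ overlapping an already processed one the current map $F^{(j-1)}$ is no longer nearly constant---it sweeps from level $\approx c$ up to level $\approx c'$ across the overlap. The discs you must then attach (note they have to be based at $F^{(j-1)}(u)$, not at the original $F(u)$, to satisfy the hypothesis $F(u,0)=f(u)$ of Theorem \ref{th:RH}) have base points spread over a large region, and there is no reason for their union to lie in any affine chart.

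The paper resolves this by decomposing in the target rather than in the source: it introduces the sets $D(\omega,c,c')$ of \eqref{eq:DUc}, a fine subdivision $c=c_0<c_1<\cdots<c_m=c'$ together with small covers $\omega_{i,j}$ of $b\Omega_{c_i}$ chosen so that \eqref{eq:affine} forces each step's family of discs into an affine chart, and---crucially---the invariance property \eqref{eq:avoiding}, which guarantees that a boundary point already pushed out of $\overline{D_1\cup\cdots\cup D_{j-1}}$ cannot re-enter those sets when later discs are attached. This gives a monotone progress measure that closes the induction; the plain level bookkeeping you propose (which you identify as the main obstacle, but which is in fact the easy part, handled by \eqref{eq:Lz2}) does not control which discs get attached at later stages. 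To repair your argument you would essentially have to reintroduce this target-space decomposition.
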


The details of proof that Lemma \ref{lem:bigstep} implies Theorem \ref{th:Omega} are left to the reader. 
Inductive constructions of this type are ubiquitous in the literature; see e.g.\ 
\cite[proof of Theorem 1.1]{DrinovecForstneric2007DMJ} 
using \cite[Lemma 6.3]{DrinovecForstneric2007DMJ}
and note that our situation is simpler since the exhaustion function $\rho$ \eqref{eq:rho}
of $\Omega$ has no critical points in $\Omega\setminus\pi^{-1}(0)$. 
In order to ensure that the limit map of this procedure is a Legendrian embedding, we use the general 
position theorem (see Theorem \ref{th:emb}) at each step and approximate sufficiently 
closely in subsequent steps.

\begin{proof}[Proof of Lemma \ref{lem:bigstep}]
Given a pair of numbers $0<c<c'<1$ and an open set $\omega\subset b\Omega_c=\rho^{-1}(c)$
(see \eqref{eq:Omegac}), we let
\begin{equation}\label{eq:DUc}
	D(\omega,c,c') := \Omega_{c'} \setminus \bigcup_{z\in b\Omega_c\setminus \omega}L_z.
\end{equation}
Clearly, $D(\omega,c,c')$ is an open set containing $\Omega_c$ and we have that 
\begin{equation}\label{eq:avoiding}
	z\in \Omega\setminus D(\omega,c,c') \ \Longrightarrow \ 
	L_z\subset  \Omega\setminus D(\omega,c,c').
\end{equation}
Moreover, it is elementary to see that 
there is a subdivision $c=c_0<c_1<\cdots < c_m=c'$ and for every $i=0,1,\ldots,m-1$ 
a finite open cover $\omega_{i,1},\ldots,\omega_{i,k_i}$ of $b\Omega_{c_i}$ such that 
\begin{equation}\label{eq:exhausting}
	\bigcup_{j=1}^{k_i} D(\omega_{i,j},c_{i},c_{i+1}) = \Omega_{c_{i+1}},
\end{equation}
and for every $i$ as above and $j=1,\ldots,k_i$ we also have that
\begin{equation}\label{eq:affine}
	\bigcup_{z\in \overline{D(\omega_{i,j},c,c')\setminus \Omega_{c_i}}} 
	L_z\cap \overline \Omega_{c_{i+1}} \ \ \text{is contained in an affine chart of $\CP^3$}.
\end{equation}

We are now ready to prove the lemma. Let us begin by explaining the initial step.
The assumptions imply that $F(bM) \subset \Omega_{c,c'}$. Consider the set 
\begin{equation}\label{eq:Iprime1}
	I'_1=\{u\in bM: F(u)\in D_1:=D(\omega_{1,1},c_0,c_1)\}. 
\end{equation}
Assume first that $I'_1$ does not contain any boundary component of $M$.
Then, $I'_1$ is contained in the interior of the union $I=\bigcup_{i=1}^j I_i$ of finitely many pairwise 
disjoint segments $I_1,\ldots,I_j\subset bM$ none of which is a component of $bM$.
Choose a number $c'_1$ with $c_1<c'_1<c_2$ and close to $c_1$. Consider the Riemann-Hilbert problem 
(cf.\ Theorem \ref{th:RH}) with the central Legendrian curve $F:M\to\Omega$ and the family 
of Legendrian discs $\wh L_u:=L_{F(u)}\cap \overline \Omega_{c'_1}$ for points $u\in I$. 
(In Theorem \ref{th:RH}, the central disc is denoted $f$ and parameterizations of the boundary 
discs are denoted $F(u,\cdotp)$.) For the values $u\in I\setminus I'_1$ we shrink the discs $\wh L_u$
within themselves (by dilations) to reach the constant discs $\wh L_u=\{F(u)\}$ as $u$ reaches the 
boundary of $I$; these discs remain in the complement of $\overline D_1$ in view of \eqref{eq:avoiding}.  
By \eqref{eq:affine} and decreasing $c'_1>c_1$ if necessary we can also arrange that the set
$\bigcup_{u\in I} \wh L_u$ is contained in an affine chart of $\CP^3$.
Applying Theorem \ref{th:RH} to this configuration gives a new holomorphic Legendrian curve 
$F':\overline M\to \Omega$ whose boundary $F'(bM)\subset \Omega_{c,c'}$ no longer intersects 
$\overline D_1$ and the remaining conditions in the theorem are satisfied. If however the set $I'_1$ 
\eqref{eq:Iprime1} contain a boundary component of $M$, we perform the 
same procedure twice, first pushing a part of $I'_1$ out of $\overline D_1$
and thereby reducing to the previous case. 

The subsequent steps are basically the same as the first one. For simplicity 
we denote the result of step 1 again by $F$, so in step 2 the assumption is that 
$F(bM)\subset \Omega_{c'} \setminus \overline D_1$. By following the same procedure we 
push the boundary of $M$ out of the set $\overline{D_1\cup D_2}$ where
$D_2:=D(\omega_{1,2},c_0,c_1)$. Note that a point of $F(bM)$ which is outside of $\overline D_1$ 
will not reenter this set in subsequent steps in view of condition \eqref{eq:avoiding}. 
We see from \eqref{eq:exhausting} that in $k_1$ steps of this kind the image of $bM$ is pushed 
into $\Omega_{c_1,c'}$. We then continue inductively to the next levels $c_2,\ldots,c_m=c'$, 
eventually pushing the image of $bM$ into the domain $\Omega_{c',c''}$ by a Legendrian map
$G$ satisfying the conditions in the lemma.
\end{proof}

%
%   ACKNOWLEDGMENTS
%
\subsection*{Acknowledgements}
Research on this paper was supported by the program P1-0291 and grant J1-9104
from ARRS, Republic of Slovenia, and by the Stefan Bergman Prize 2019 awarded by the 
American Mathematical Society.

%%%%%%%%%%
%%%%%%%%%%
%%%%%%%%%%
%%%%%%%%%%   THE BIBLIOGRAPHY
%%%%%%%%%%
%%%%%%%%%%

%{\bibliographystyle{abbrv} \bibliography{references}} 

%%%%%%%%%%
%%%%%%%%%%
%%%%%%%%%%
%%%%%%%%%%   AFFILIATIONS
%%%%%%%%%%
%%%%%%%%%%

\vspace*{5mm}
\noindent Franc Forstneri\v c

\noindent Faculty of Mathematics and Physics, University of Ljubljana, Jadranska 19, SI--1000 Ljubljana, Slovenia, and 

\noindent 
Institute of Mathematics, Physics and Mechanics, Jadranska 19, SI--1000 Ljubljana, Slovenia

\noindent e-mail: {\tt franc.forstneric@fmf.uni-lj.si}

\end{document}